\theoremstyle{plain}
\newtheorem{theorem}{Theorem}[section]
\newtheorem{definition}[theorem]{Definition}
\newtheorem{proposition}[theorem]{Proposition}
\newtheorem{lemma}[theorem]{Lemma}
\newtheorem{remark}[theorem]{Remark}
\numberwithin{equation}{section}
\theoremstyle{definition}
\newcommand{\R}{\ensuremath{\mathbb{R}}}
\begin{document}

\title{Blow-up and global existence for the \\
inhomogeneous porous medium equation \\ with reaction }
%
%
\author{Giulia Meglioli\thanks{Dipartimento di Matematica, Politecnico di Milano, Italia (giulia.meglioli@polimi.it).}\,\, and Fabio Punzo\thanks{Dipartimento di Matematica, Politecnico di Milano, Italia (fabio.punzo@polimi.it).} \\ \\ \\ {\it Dedicated to the memory of Maria Assunta Pozio} }

\date{}
%

%

\maketitle              

\begin{abstract}
We study finite time blow-up and global existence of solutions to the Cauchy problem for the porous medium equation with a variable density $\rho(x)$ and a power-like reaction term. We show that for small enough initial data, if $\rho(x)\sim \frac{1}{\left(\log|x|\right)^{\alpha}|x|^{2}}$ as $|x|\to \infty$, then solutions globally exist for any $p>1$. On the other hand, when $\rho(x)\sim\frac{\left(\log|x|\right)^{\alpha}}{|x|^{2}}$ as $|x|\to \infty$, if the initial datum is small enough then one has global existence of the solution for any $p>m$, while if the initial datum is large enough then the blow-up of the solutions occurs for any $p>m$. Such results generalize those established in \cite{MP1} and \cite{MP2}, where it is supposed that $\rho(x)\sim |x|^{-q}$ for $q>0$ as $|x|\to \infty$.
\end{abstract}

\bigskip

\noindent {\it  2010 Mathematics Subject Classification: 35B44, 35B51,  35K57, 35K59, 35K65.}

\noindent {\bf Keywords:} Porous medium equation; global existence; blow-up; sub--supersolutions; comparison principle.

\section{Introduction}
We are concerned with global existence and blow-up of nonnegative solutions to the Cauchy parabolic problem
\begin{equation}\label{problema}
\begin{cases}
\rho (x) u_t = \Delta(u^m)+\rho(x) u^p & \text{in }\, \R^N \times (0,\tau) \\
u=u_0& \text{in }\, \R^N \times \{0\}\,,
\end{cases}
\end{equation}
where $m>1$, $p>1$, $N\geq 3$, $\tau>0$. Furthermore, we always assume that
\begin{equation}\label{hpu0}
\begin{cases}
\textrm{(i)} \; u_0\in L^\infty(\mathbb R^N), \,\,u_0\geq 0\,\, \textrm{in}\,\, \mathbb R^N\,;\\
\textrm{(ii)} \; \rho\in C(\mathbb R^N),\, \rho>0\,\, \textrm{in}\,\, \mathbb R^N\,;
\end{cases}
\end{equation}
the function $\rho=\rho(x)$ is usually referred to as a {\it variable density}.

The differential equation in problem \eqref{problema}, posed in $(-1,1)$ with homogeneous Dirichlet boundary conditions, has been introduced in   \cite{KR3} as a mathematical model of a thermal evolution of a heated plasma. 

\smallskip

We refer the reader to \cite[Introduction]{MP1}, \cite[Introduction]{MP2} for a comprehensive account of the literature concerning various problems related to \eqref{problema}. Here we limit ourselves to recall only some contribution of that literature. 
Problem \eqref{problema} without the reaction term has been widely examined, e.g., in  \cite{Eid90,EK, GMPor, GMPfra1, GMPfra2, KKT, KPT, KP1, KP2, KRV10, KR1, KR2, PoT, PoPT2, PoPT, P1, RV06}. Furthermore, global existence and blow-up of solutions of problem \eqref{problema} with $m=1$ and $\rho\equiv 1$ have been studied, e.g., in \cite{F, H}). If
$$p\leq 1+\frac 2 N,$$ then finite time blow-up occurs, for all nontrivial nonnegative data, whereas, for
$$p>1+\frac 2 N,$$ global existence prevails for sufficiently small initial conditions. In addition, in \cite{LX} (see also \cite{dPRS}), problem \eqref{problema} with $m=1$ has been considered. 

Similar results for quasilinear parabolic equations, also involving $p$-Laplace type operators or double-nonlinear operators, have been stated in \cite{IS1}, \cite{MT}, \cite{MTS}, \cite{MTS2}, \cite{PT}, \cite{WZ} (see also \cite{GMP1} and \cite{MMP} for the case of Riemannian manifolds); moreover, in \cite{GMPhyp} the same problem on Cartan-Hadamard manifolds has been investigated.

\smallskip

Global existence and blow-up of solutions for problem \eqref{problema} with $\rho$ satisfying 
\begin{equation}\label{eq13}
\frac{1}{k_1|x|^q}\leq \rho(x)\leq \frac{1}{k_2|x|^q}\quad \textrm{for all }\,\,|x|>1
\end{equation}
have been investigated in \cite{MP1} for $q\in [0, 2)$, and in \cite{MP2} for $q\geq 2$.
In \cite{MP1}, for $q\in [0, 2)$, the following results have been established. 

\begin{itemize}
\item (\cite[Theorem 2.1]{MP1}) If $p>\overline p$, for a certain $\overline p=\overline p(k_1,k_2,q,m,N)>m$ and the initial datum is sufficiently small, then solutions exist globally in time. Observe that 
$$\overline p=m+\frac{2-q}{N-q}\,\,\,\text{when}\,\,\,k_1=k_2.$$
\item (\cite[Theorem 2.4]{MP1}) For any $p>1$, for all sufficiently large initial data, solutions blow-up in finite time.
\item (\cite[Theorem 2.6]{MP1}) For $1<p<m$, for any non trivial initial data, solutions blow-ip in finite time.
\item (\cite[Theorem 2.7]{MP1}) If  $m<p<\underline p$, for a certain $\underline p=\underline p(k_1,k_2,q,m,N)\le \overline p$, then, for any non trivial initial data, solutions blow-up in finite time, under specific extra assumptions on $\rho$.
\end{itemize}
Such results extend those stated in \cite{SGKM} for problem \eqref{problema} with $\rho\equiv 1$, $m>1$, $p>1$ (see also \cite{GV}). 

\medskip

Furthermore, assume that \eqref{eq13} holds with $q\geq 2$. In \cite{MP2} the following results have been showed.
\begin{itemize}
\item(\cite[Theorem 2.1]{MP2}) If $q=2$ and $p>m$, then, for sufficiently small initial data, solutions exist globally in time.
\item(\cite[Theorem 2.2]{MP2}) If $q=2$ and $p>m$, then, for sufficiently large initial data,  solutions blow-up in finite time.
\item(\cite[Theorem 2.3]{MP2}) If $q>2$, then, for any $p>1$, for sufficiently small initial data, solutions exist globally in time. 
\end{itemize}

%

\medskip
Finally, in \cite{GMP1}, \eqref{problema} is addressed, when $p<m$. It is assumed that \eqref{hpu0} is satisfied,
and that the weighted Poincar\'{e} inequality with weight $\rho$ holds. Moreover, in view of the assumption on $\rho$ also the weighted Sobolev inequality is fulfilled. By using such functional inequalities, it is showed 
that global existence for $L^m$ data occurs, as well as a smoothing effect for the solution, i.e. solutions corresponding to such data are bounded for any positive time. In addition, a quantitative bound on the $L^{\infty}$ norm of the solution is given.

\medskip

In what follows, we always consider two types of density functions $\rho$. To be more specific, we always make one of the following two assumptions: 
 \begin{equation}\tag{{\it $H_1$}} \label{hp1}
\begin{aligned}
&\textrm{there exist} \,\, k\in (0, +\infty)\,\, \textrm{ and }\, \alpha>1\,\, \textrm{such that}\\
&\;\;\,\, \dfrac{1}{\rho(x)}\ge k\left(\log|x|\right)^{\alpha}|x|^2\quad \textrm{for all}\,\,\, x\in \mathbb R^N\setminus B_e(0)\,;\\ 
\end{aligned}
\end{equation}
 \begin{equation}\tag{{\it $H_2$}} \label{hp2}
\begin{aligned}
&\textrm{there exist} \,\, k_1, k_2\in (0, +\infty)\,\, \textrm{with}\,\, k_1\leq k_2 \textrm{ and }\, \alpha>1\,\, \textrm{such that}\\
& \;\;\,\, k_1\dfrac{|x|^2}{\left(\log|x|\right)^{\alpha}} \le\dfrac{1}{\rho(x)}\le k_2\dfrac{|x|^2}{\left(\log|x|\right)^{\alpha}}\quad \textrm{for all}\,\,\, x\in \mathbb R^N\setminus B_e(0)\,.\\ 
\end{aligned}
\end{equation}

Assume \eqref{hp1}. For $1<p<m$ and for suitable initial data $u_0\in L^\infty(\R^N)$, we show the existence of global solutions belonging to $L^\infty(\mathbb R^N \times (0, \tau))$ for each $\tau>0.$ Indeed, in this case, the global existence follows from the results in \cite{GMP1} for $u_0\in L^m_{\rho}(\mathbb R^N).$
However, now we consider a different class of initial data $u_0$. In fact, $u_0\in L^\infty(\mathbb R^N)$ and satisfies a decaying condition as $|x|\to +\infty$; however, $u_0$ not necessarily belongs to $L^m_{\rho}(\mathbb R^N)$. 

On the other hand, for $p>m>1$, if $u_0$ satisfies a suitable decaying condition as $|x|\to +\infty$, then problem \eqref{problema} admits a solution in $L^\infty(\mathbb R^N\times (0, +\infty))$. 

Now, assume \eqref{hp2}. For any $p>m$, if $u_0$ is sufficiently large, then the solutions to problem \eqref{problema} blow-up in finite time. Moreover, 
if $p>m,$
$u_0$ has compact support and is small enough, then, under suitable assumptions on $k_1$ and $k_2$, there exist global in time solutions to problem \eqref{problema}, which belong to $L^\infty(\mathbb R^N \times (0, +\infty))$.

\smallskip

The proofs mainly relies on suitable comparison principles and properly constructed sub- and supersolutions, 
which crucially depend on the behavior at infinity of the density function $\rho(x)$. More precisely,  they are of the type
\begin{equation}\label{e4f}
w(x,t)=C\zeta(t)\left [ 1- \frac{\left(\log(|x|+r_0)\right)^{q}}{a} \eta(t) \right ]_{+}^{\frac{1}{m-1}}\quad \textrm{for any}\,\,\, (x,t)\in \big[\mathbb R^N\setminus B_e(0)\big]\times [0, T),
\end{equation}
for suitable functions $\zeta=\zeta(t), \eta=\eta(t)$ and constants $C>0, a>0, r_0>0$ and $q>1$. 
The paper is organized as follows. In Section \ref{statements} we state our main results, in Section \ref{prel} we give the precise definitions of solutions and we recall some auxiliary results. In Section \ref{gepr} we prove Theorem \ref{teo1}. The blow-up result (that is, Theorem \ref{teo3}) is proved in Section \ref{bupr}. Finally, in Section \ref{t2} Theorem \ref{teo2} is proved .

\section{Statements of the main results}\label{statements}

For any $x_0\in \mathbb R^N$ and $R>0$ we set
\begin{equation}
B_R(x_0)=\{x\in \R^N :  \| x-x_0 \| < R \}.
\end{equation}
When $x_0=0$, we write $B_R\equiv B_R(0).$

\subsection{Density $\rho$ satisfying \eqref{hp1}} 

The first result concerns the global existence of solutions to problem \eqref{problema} for any $p>1$ and $m>1$, $p\neq m$. We introduce the parameter $b\in \R$ such that
\begin{equation}\label{eq22}
0<b<\alpha-1.
\end{equation}
Moreover, since $N\geq 3$, we can choose $\varepsilon>0$ so that 
\begin{equation}\label{eq23}
N-2-\varepsilon(b+1)>0,
\end{equation}
and $r_0>e$ so that 
\begin{equation}\label{eq24}
\frac{1}{\log(|x|+r_0)}<\varepsilon\,\,\quad\text{for any}\,\,x\in\R^N.
\end{equation}
Finally, we can find $\bar c>0$ such that
\begin{equation}\label{eq25}
\left[\log(|x|+r_0)\right]^{-\frac{\bar bp}{m}}\le \bar c \quad \text{for any}\,\,x\in\R^N\,.
\end{equation}
Observe that, thanks to \eqref{hpu0}-(i) and \eqref{hp1}, we can say that there exists $k_0>0$ such that 
\begin{equation}\label{eq26}
\frac{1}{\rho(x)}\ge k_0\left[\log(|x|+r_0)\right]^{\alpha}(|x|+r_0)^2 \quad \text{for any }\,\, x\in \mathbb R^N\,.
\end{equation}

\begin{theorem}\label{teo1}
Let assumptions \eqref{hpu0}, \eqref{hp1}, \eqref{eq22}, \eqref{eq23} and \eqref{eq24} be satisfied. Suppose that $$1<p<m\,,\quad \text{or}\, \quad p> m> 1\,,$$
and that $u_0$ is small enough. Then problem \eqref{problema} admits a global solution $u\in L^\infty(\mathbb R^N\times (0, \tau))$ for any $\tau>0$.
More precisely, we have the following cases.
\begin{itemize}
\item[(a)]\, Let $1<p<m$. If $C>0$ is big enough, $T>1$, $\beta> 0$,
\begin{equation}\label{eq27}
u_0(x) \le CT^{\beta} \left (\log(|x|+r_0) \right )^{-\frac{b}{m}} \quad \text{for any}\,\, x\in \R^N\,,
\end{equation}
then problem \eqref{problema} admits a global solution $u$, which satisfies the bound from above
\begin{equation}\label{eq28}
u(x,t) \le C(T+t)^{\beta} \left (\log(|x|+r_0) \right )^{-\frac{b}{m}} \, \text{for any}\,\, (x,t)\in \R^N \times (0,+\infty)\,.
\end{equation}
\item[(b)]\, Let $p> m >  1$. If $C>0$ is small enough, $T>0$ and \eqref{eq27} holds with $\beta=0$, then problem \eqref{problema} admits a global solution $u\in L^{\infty}(\R^N\times (0,+\infty))$, which satisfies the bound from above \eqref{eq28} with $\beta=0$.
\end{itemize}
\end{theorem}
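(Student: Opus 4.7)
The plan is to construct an explicit supersolution of \eqref{problema} having the same shape as the right-hand side of \eqref{eq28},
$$\bar u(x,t) := C(T+t)^{\beta}\bigl(\log(|x|+r_0)\bigr)^{-b/m},$$
and to obtain $u$ as the monotone limit of Dirichlet problems posed on balls $B_R$ that remain dominated by $\bar u$. The initial comparison $\bar u(\cdot,0)\ge u_0$ is exactly hypothesis \eqref{eq27}, so the content lies in the differential inequality $\rho\,\bar u_t\ge\Delta(\bar u^m)+\rho\,\bar u^p$ and in the limit $R\to+\infty$.

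To verify the supersolution inequality away from the origin, write $r=|x|$, $s=\log(r+r_0)$ and compute directly
$$\Delta\bigl(s^{-b}\bigr)=\frac{b\,s^{-b-1}}{r(r+r_0)^2}\Bigl[\,r\bigl(1+(b+1)/s-(N-1)\bigr)-(N-1)\,r_0\Bigr].$$
Using \eqref{eq24} to bound $(b+1)/s\le\varepsilon(b+1)$ and then \eqref{eq23}, the coefficient $1+(b+1)/s-(N-1)$ is bounded above by $-(N-2-\varepsilon(b+1))<0$; both bracketed terms are then negative, the factor of $r$ cancels, and one obtains the clean estimate
$$\Delta(\bar u^m)\le -\frac{C^m(T+t)^{m\beta}\,b\,(N-2-\varepsilon(b+1))}{(r+r_0)^2\,s^{b+1}}.$$
Combining with \eqref{eq26} to control $\rho\,\bar u^p\le C^p(T+t)^{p\beta}/\bigl[k_0\,(r+r_0)^2\,s^{\alpha+bp/m}\bigr]$ and discarding the nonnegative term $\rho\,\bar u_t$, the supersolution inequality reduces to the scalar condition
$$C^{p-m}(T+t)^{(p-m)\beta}\le k_0\,b\,(N-2-\varepsilon(b+1))\,s^{\alpha-1+b(p-m)/m}.$$
In case (a), $p<m$, and the hypothesis $b<\alpha-1$ from \eqref{eq22} gives $\alpha-1+b(p-m)/m>(\alpha-1)p/m>0$, so $s^{\alpha-1+b(p-m)/m}\ge(\log r_0)^{\alpha-1+b(p-m)/m}>0$ since $r_0>e$; because $(p-m)\beta<0$ the left-hand side is maximized at $t=0$, where it equals $C^{p-m}T^{(p-m)\beta}$ and can be made arbitrarily small by taking $C$ large with $T>1$. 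In case (b), $\beta=0$ and the exponent of $s$ exceeds $\alpha-1$, so the left-hand side is the constant $C^{p-m}$ with $p-m>0$ and the inequality follows by choosing $C$ small enough.

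With $\bar u$ in hand, I would obtain the global solution as the monotone limit of Dirichlet problems on balls $B_R$ with initial data $u_0$ truncated to $B_R$; parabolic regularity on bounded domains yields local existence of $u_R$, while the comparison principle on $B_R$ (recalled in Section \ref{prel}) against $\bar u$ forces $u_R\le\bar u$ so long as $u_R$ exists, preventing finite-time blow-up of $u_R$ and producing a uniform $L^\infty$ bound on every strip $\R^N\times(0,\tau)$. Passing to the limit $R\to+\infty$ yields a solution $u$ of \eqref{problema} satisfying \eqref{eq28}, which lies in $L^\infty(\R^N\times(0,\tau))$ for every $\tau>0$, and in fact uniformly in $t$ in case (b). The main technical hurdle I anticipate is the Laplacian estimate: the positive $s^{-b-2}$ contribution must be beaten by the negative $(N-1)/r$ one, for which the precise smallness \eqref{eq24} and the strict inequality \eqref{eq23} are tailor-made to cancel the factor of $r$ and leave a negative term proportional to $(r+r_0)^{-2}s^{-b-1}$; balancing that against $\rho\,\bar u^p$ through \eqref{eq26} drives the case split between (a) and (b) and is exactly what forces the restriction $b<\alpha-1$ in \eqref{eq22}.
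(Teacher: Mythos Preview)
Your proposal is correct and follows essentially the same route as the paper: build the explicit supersolution $\bar u(x,t)=C(T+t)^{\beta}\bigl(\log(|x|+r_0)\bigr)^{-b/m}$, verify the differential inequality pointwise via the Laplacian computation together with \eqref{eq23}, \eqref{eq24}, \eqref{eq26}, and then conclude by comparison. The paper organizes the estimates slightly differently---it bounds $s^{-bp/m}$ and $s^{\alpha-b-1}$ by the constants $\bar c$ and $1$ (using \eqref{eq25} and $\alpha>b+1$) so that the scalar condition becomes $k_0\,b\,(N-2-\varepsilon(b+1))C^m\zeta^m\ge \bar c\,C^p\zeta^p$, whereas you keep the full $s$-dependence; the two reductions are equivalent, and your verification that the exponent $\alpha-1+b(p-m)/m>(\alpha-1)p/m>0$ is exactly what makes the paper's constant bounds legitimate. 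One technical point you pass over: the radial Laplacian formula is singular at $r=0$, and $\bar u^m$ has a downward kink there since $(\bar u^m)_r(0,t)<0$; the paper handles this by a one-line Kato-type argument (the distributional Laplacian only picks up a favorable sign), which you should invoke before claiming $\bar u$ is a supersolution on all of $\mathbb R^N$. The limit-of-Dirichlet-problems construction you sketch is precisely the content of the paper's Propositions~\ref{prop1} and~\ref{cpsup}, which are quoted from \cite{MP1}.
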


\subsection{Density $\rho$ satisfying \eqref{hp2}} 
The next result concerns the blow-up of solutions in finite time, for every $p>m>1$, provided that the initial datum is sufficiently large. We assume that hypotheses \eqref{hpu0} and \eqref{hp2} hold. In view of \eqref{hpu0}-(i), there exist $\rho_1, \rho_2\in (0, +\infty)$ with $\rho_1\leq \rho_2$ such that
\begin{equation}\label{eq215}
\rho_1\leq \frac1{\rho(x)}\leq \rho_2 \quad \textrm{for all}\,\,\, x\in \overline{B_e(0)}.
\end{equation}
Let
\begin{equation}\label{eq216}
\underline b:=\alpha+1,
\end{equation}
and
\[\mathfrak{s}(x):=\begin{cases}
\left(\log|x|\right)^{\underline b}  &\quad \text{if}\quad  x\in \R^N\setminus B_e, \\
& \\
\dfrac{\underline b \,|x|^2}{2e^2}+1-\dfrac{\underline b}{2}&\quad\text{if}\quad  x\in B_e\,.
\end{cases}\]

\begin{theorem}\label{teo3}

Let assumptions \eqref{hpu0}, \eqref{hp2}, \eqref{eq215} and \eqref{eq216} hold. For any $$p>m$$ and for any $T>0$, if the initial datum $u_0$ is large enough, then the solution $u$ of problem \eqref{problema} blows-up in a finite time $S\in (0,T]$, in the sense that
\begin{equation}\label{eq217}
\|u(t)\|_{\infty} \to \infty \text{ as } t \to S^{-}\,.
\end{equation}
More precisely, if $C>0$ and $a>0$ are large enough, $T>0$,
\begin{equation}\label{eq218}
u_0(x)\ge CT^{-\frac{1}{p-1}}\left[1-\frac{\mathfrak{s}(x)}{a}\,T^{\frac{m-p}{p-1}}\right]^{\frac{1}{m-1}}_{+} \quad \text{for any}\,\, x\in \R^N\,,
\end{equation}
then the solution $u$ of problem \eqref{problema} blows-up and satisfies the bound from below
\begin{equation} \label{eq219}
u(x,t) \ge C (T-t)^{-\frac{1}{p-1}}\left [1- \frac{\mathfrak{s}(x)}{a}\, (T-t)^{\frac{m-p}{p-1}} \right ]_{+}^{\frac{1}{m-1}}\,\, \text{for any}\,\, (x,t) \in \R^N\times(0,S)\,.
\end{equation}
\end{theorem}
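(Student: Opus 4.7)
The plan is to construct the explicit candidate $w(x,t):= C(T-t)^{-1/(p-1)}\bigl[1-\mathfrak{s}(x)(T-t)^{(m-p)/(p-1)}/a\bigr]_+^{1/(m-1)}$, i.e.\ the right-hand side of \eqref{eq219}, show that it is a subsolution of \eqref{problema} on its positivity set $\mathcal U:=\{w>0\}$, and invoke a comparison principle. Hypothesis \eqref{eq218} gives $u_0\ge w(\cdot,0)$, and since $w=0$ on the lateral free boundary of $\mathcal U$ the comparison principle recalled in Section~\ref{prel} yields $u\ge w$ in $\R^N\times(0,T)$. Because $(m-p)/(p-1)<0$ and $\mathfrak{s}$ is bounded near $0$, a fixed neighborhood of the origin lies in $\{x:w(x,t)>0\}$ for all $t$ sufficiently close to $T$, and there $w(x,t)\sim C(T-t)^{-1/(p-1)}\to\infty$ as $t\nearrow T$; this forces $\|u(t)\|_\infty\to\infty$ at some time $S\le T$, which is \eqref{eq217}.

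To verify the subsolution property on $\mathcal U$, write $\theta:=T-t$, $\mu:=(m-p)/(p-1)<0$, $F:=1-\mathfrak{s}\theta^\mu/a\in(0,1]$. Direct differentiation produces
\begin{align*}
w_t &= \tfrac{C}{p-1}\theta^{-\frac{p}{p-1}}F^{\frac{1}{m-1}} + \tfrac{C\mu\mathfrak{s}}{a(m-1)}\theta^{\frac{m-2p}{p-1}}F^{\frac{2-m}{m-1}},\\
\Delta(w^m) &= \tfrac{mC^m}{m-1}\theta^{-\frac{m}{p-1}}\Bigl[\tfrac{\theta^{2\mu}}{a^2(m-1)}F^{\frac{2-m}{m-1}}|\nabla\mathfrak{s}|^2 - \tfrac{\theta^\mu}{a}F^{\frac{1}{m-1}}\Delta\mathfrak{s}\Bigr],
\end{align*}
and $w^p=C^p\theta^{-p/(p-1)}F^{p/(m-1)}$. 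All the resulting exponents of $\theta$ are either $-p/(p-1)$ or $(m-2p)/(p-1)=-p/(p-1)+\mu$, and the identity $\theta^\mu\mathfrak{s}/a=1-F$ absorbs the remaining $\theta^\mu$; thus dividing $\rho w_t\le\Delta(w^m)+\rho w^p$ by $\rho\theta^{-p/(p-1)}$ reduces it to a $\theta$-independent inequality in $F\in(0,1]$ whose coefficients involve only $C$, $a$, and the geometric ratios $|\nabla\mathfrak{s}|^2/(\rho\mathfrak{s})$ and $\Delta\mathfrak{s}/\rho$.

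The decisive role of $\underline b=\alpha+1$ is to make both geometric ratios uniformly bounded on $\R^N$. On $\{|x|>e\}$, $\mathfrak{s}=(\log|x|)^{\underline b}$ together with \eqref{hp2} gives
$$\frac{|\nabla\mathfrak{s}|^2}{\rho\,\mathfrak{s}}\le k_2\underline b^2(\log|x|)^{-1},\qquad \frac{\Delta\mathfrak{s}}{\rho}\le k_2\underline b\Big[(N-2)+\tfrac{\underline b-1}{\log|x|}\Big],$$
the key exponent cancellation $2\underline b-2-\alpha-\underline b=-1$ being tuned precisely by $\underline b=\alpha+1$; on $\overline{B_e(0)}$, $\mathfrak{s}$ is quadratic so $\Delta\mathfrak{s}=N\underline b/e^2$ is constant and $|\nabla\mathfrak{s}|^2/\mathfrak{s}$ is bounded, while \eqref{eq215} bounds $1/\rho$ by $\rho_2$. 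Moving the (negative, since $\mu<0$) LHS summand involving $(1-F)F^{(2-m)/(m-1)}$ to the right, the subsolution inequality reduces schematically to
$$\tfrac{C}{p-1}F^{\frac{1}{m-1}} \;\le\; \Big(\tfrac{C|\mu|}{m-1}+K_1\tfrac{C^m}{a}\Big)(1-F)F^{\frac{2-m}{m-1}} \;-\; K_2\tfrac{C^m}{a}F^{\frac{1}{m-1}} \;+\; C^pF^{\frac{p}{m-1}},$$
with constants $K_1,K_2$ depending only on $N,m,p,\alpha,k_2,\rho_2$.

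The main obstacle is coordinating the dominance across the whole range $F\in(0,1]$. For $F\to 0^+$, the factor $F^{(2-m)/(m-1)}$ dominates $F^{1/(m-1)}$ (their ratio equals $F^{-1}$), so the first term on the right overwhelms the left; for $F$ bounded away from $0$ the reaction term $C^pF^{p/(m-1)}$ dominates $CF^{1/(m-1)}/(p-1)$ once $C$ is large (using $p>m>1$), and the negative correction $-K_2C^mF^{1/(m-1)}/a$ is then absorbed by enlarging $a$. With the subsolution property confirmed on $\mathcal U$ and the trivial comparison across $\partial\mathcal U$ (where $w=0\le u$), the comparison principle yields the pointwise lower bound \eqref{eq219}, and since $w(x,t)\to\infty$ as $t\nearrow T$ on a neighborhood of the origin, the blow-up \eqref{eq217} follows with some $S\le T$.
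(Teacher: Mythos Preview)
Your proof is correct and follows essentially the same strategy as the paper: the paper constructs the identical barrier $\underline w$ (splitting it as $\underline u$ on $\R^N\setminus B_e$ and $\underline v$ on $B_e$, glued via Lemma~\ref{lemext}), reduces the subsolution inequality in Proposition~\ref{prop51} to the same time-independent profile inequality $\varphi(F)=\underline\sigma F-\underline\delta-\underline\gamma F^{(p+m-2)/(m-1)}\le0$ on $(0,1]$, and then applies the comparison principle for compactly supported subsolutions (Proposition~\ref{cpsub}). The only difference is in the last step: the paper verifies $\varphi\le0$ by explicitly locating the maximizer $F_0=\big[\tfrac{m-1}{p+m-2}\tfrac{\underline\sigma}{\underline\gamma}\big]^{(m-1)/(p-1)}$ and checking $\varphi(F_0)\le0$, which produces the sharp algebraic conditions \eqref{eq530}--\eqref{eq531} on $C$ and $C^{m-1}/a$, whereas you argue qualitatively by treating $F\to0^+$ and $F$ bounded away from $0$ separately---correct in spirit, but you should make the crossover argument quantitative to close the proof.
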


Observe that if $u_0$ satisfies \eqref{eq218}, then
\begin{equation*}
\operatorname{supp}u_0\supseteq \{x\in \mathbb R^N\,:\, \mathfrak{s}(x)< a T^{\frac{p-m}{p-1}}\}\,.
\end{equation*}
From \eqref{eq219} we can infer that
\begin{equation}\label{eq220}
\operatorname{supp}u(\cdot, t)\supseteq \{x\in \mathbb R^N\,:\, \mathfrak{s}(x) < a (T-t)^{\frac{p-m}{p-1}}\}\quad \textrm{for all } t\in[0,S)\,.
\end{equation}
\medskip
The choice of the parameters $C>0, T>0$ and $a>0$ is discussed in Remark \ref{rem52}.

\medskip

The next result concerns the global existence of solutions to problem \eqref{problema} for $p>m$. We assume that $\rho$ satisfies a stronger condition than \eqref{hp2}. Indeed, we suppose that 
\begin{equation}\label{eq29}
k_1\frac{(|x|+r_0)^2}{\left(\log(|x|+r_0)\right)^{\alpha}} \le\dfrac{1}{\rho(x)}\le k_2\frac{(|x|+r_0)^2}{\left(\log(|x|+r_0)\right)^{\alpha}}\quad \textrm{for all}\,\,\, x\in \mathbb R^N\,,
\end{equation}
where 
\begin{equation}\label{eq210}
r_0 > e, \quad \frac{k_2}{k_1}<m+(N-3)\left(\frac{m-1}{\overline b}\right)\,,
\end{equation}
and
\begin{equation}\label{eq211}
\overline b:=\alpha+2.
\end{equation}

\begin{theorem}\label{teo2}
Assume \eqref{hpu0}, \eqref{eq29}, \eqref{eq210} and \eqref{eq211}. Suppose that $$p>m\,,$$
and that $u_0$ is small enough and has compact support. Then problem \eqref{problema} admits a global solution $u\in L^\infty(\mathbb R^N\times (0, +\infty))$. \newline
More precisely, if $C>0$ is small enough, $a>0$ is so that
$$
0<\omega_0\le\frac{C^{m-1}}{a}\le\omega_1
$$
for suitable $0<\omega_0<\omega_1$, $T>0$,

\begin{equation}\label{eq212}
u_0(x) \le CT^{-\frac{1}{p-1}} \left [ 1- \frac{\left(\log(|x|+r_0)\right)^{\overline b}}{a} \, T^{-\frac{p-m}{p-1}} \right ]_{+}^{\frac{1}{m-1}} \quad \text{for any}\,\, x\in \R^N\,,
\end{equation}
then problem \eqref{problema} admits a global solution $u\in L^{\infty}(\R^N\times (0,+\infty))$. Moreover,
\begin{equation}\label{eq213}
u(x,t) \le C(T+t)^{-\frac{1}{p-1}} \left [ 1- \frac{\left(\log(|x|+r_0)\right)^{\overline b}}{a} \,(T+t)^{-\frac{p-m}{p-1}} \right ]_{+}^{\frac{1}{m-1}}\end{equation}
for any $(x,t)\in \R^N \times (0,+\infty)$.
\end{theorem}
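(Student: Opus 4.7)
The plan is to construct an explicit supersolution $w(x,t)$ whose formula is precisely the right-hand side of \eqref{eq213}, verify the initial ordering $u_0 \le w(\cdot,0)$ (which is exactly \eqref{eq212} and automatically enforces compact support of $u_0$), and apply the comparison principle recalled in Section \ref{prel}. I write
\[
\zeta(t) := C(T+t)^{-\frac{1}{p-1}}, \qquad \eta(t) := (T+t)^{-\frac{p-m}{p-1}}, \qquad \Phi(x) := \frac{[\log(|x|+r_0)]^{\overline b}}{a},
\]
and set $w := \zeta\,[1-\Phi\eta]_+^{1/(m-1)}$. Once $w$ is shown to be a weak supersolution of the PDE in \eqref{problema} on $\mathbb R^N \times (0,+\infty)$, standard comparison on cylinders $B_R \times (0,T')$, passed to the limit $R,T'\to\infty$ using the local theory from Section \ref{prel} and the compact support of $u_0$, yields $u \le w$, giving both the explicit bound \eqref{eq213} and global boundedness in $L^\infty(\mathbb R^N\times(0,+\infty))$.

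In the positivity set $\{G>0\}$ with $G := 1-\Phi\eta$, a direct computation using $\zeta' = -\zeta/[(p-1)(T+t)]$, $\eta' = -(p-m)\eta/[(p-1)(T+t)]$ and the identity $\Phi\eta = 1-G$ gives
\[
w_t \;=\; \frac{\zeta^p}{(p-1)(m-1)\,C^{p-1}}\Bigl[(p-m)\,G^{\frac{2-m}{m-1}} - (p-1)\,G^{\frac{1}{m-1}}\Bigr],
\]
while the radial chain rule applied to $w^m = \zeta^m G^{m/(m-1)}$ yields
\[
\Delta(w^m) \;=\; -\frac{m\,\zeta^m\eta}{m-1}\,\Delta\Phi(x)\,G^{\frac{1}{m-1}} + \frac{m\,\zeta^m\eta^2}{(m-1)^2}\,|\nabla\Phi(x)|^2\,G^{\frac{2-m}{m-1}}.
\]
The numerology $\overline b = \alpha+2$ from \eqref{eq211} is tuned so that, after multiplying by $1/\rho$ and using \eqref{eq29}, the spatial factors $(|x|+r_0)^{\pm 2}$ and $\xi^{-\alpha}$ (with $\xi := \log(|x|+r_0)$) combine cleanly with the powers $\xi^{\overline b - 1}$, $\xi^{\overline b - 2}$, $\xi^{2\overline b - 2}$ produced by derivatives of $\Phi$; in particular $|\nabla\Phi|^2/\rho \lesssim k_2\,\overline b^2(1-G)/(a\eta)$ via the identity $\Phi = (1-G)/\eta$, and $\Delta\Phi/\rho \gtrsim k_1\,\overline b\,[(N-2)\xi + (N-1)r_0 \xi/|x|]/a$. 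Using the scaling $\zeta^m\eta = \zeta^p/C^{p-m}$ and $1/(T+t) = \zeta^{p-1}/C^{p-1}$, the supersolution inequality $\rho w_t \ge \Delta(w^m) + \rho w^p$, divided through by $\zeta^p$, reduces to a pointwise algebraic inequality in $G \in (0,1)$ whose coefficients depend on $C,a$ only through the dimensionless parameter $\lambda := C^{m-1}/a$.

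Balancing the coefficients of $G^{(2-m)/(m-1)}$ (the leading behavior at the free boundary $G \to 0$) produces an upper bound $\lambda \le \omega_1$, coming from $k_2$ and the positive $(p-m)$-coefficient in $w_t$, while balancing the coefficients of $G^{1/(m-1)}$ in the interior, after absorbing $w^p = \zeta^p G^{p/(m-1)}$ using $G^{p/(m-1)} \le G^{1/(m-1)}$ (valid for $G \le 1$) together with smallness of $C$, produces a lower bound $\lambda \ge \omega_0$, coming from $k_1$, $N$ and the radial $(N-1)/|x|$ contribution to $\Delta\Phi$. The assumption \eqref{eq210}, $k_2/k_1 < m + (N-3)(m-1)/\overline b$, is precisely the compatibility condition $\omega_0 < \omega_1$ that makes the window non-empty; any $\lambda \in [\omega_0,\omega_1]$ then produces an admissible pair $(C,a)$, and $C$ is then further shrunk (if necessary) to kill lower-order reactive contributions. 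The main obstacle is the free-boundary balance: the factor $G^{(2-m)/(m-1)}$ is singular when $m \ge 2$, and it appears with a positive coefficient in $w_t$ (through $(p-m)$) and with a positive coefficient in $\Delta(w^m)/\rho$ (through $|\nabla\Phi|^2/\rho$), so dominance of the former over the latter must be established uniformly in $x$; this is exactly what the choice $\overline b = \alpha+2$ together with the two-sided control \eqref{eq29} are engineered to ensure. Once the pointwise supersolution condition is verified, the comparison principle of Section \ref{prel} concludes the proof.
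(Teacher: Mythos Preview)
Your approach is essentially the paper's own: the same barrier $\bar u = C\zeta(t)\bigl[1-\tfrac{(\log(|x|+r_0))^{\bar b}}{a}\eta(t)\bigr]_+^{1/(m-1)}$ with $\zeta=(T+t)^{-1/(p-1)}$, $\eta=(T+t)^{-(p-m)/(p-1)}$, the same reduction to a one-variable inequality in $G\in(0,1)$ after factoring out $G^{1/(m-1)-1}$, and the same two parameter constraints (your free-boundary balance and your interior balance correspond exactly to the paper's $\varphi(0)\ge 0$ and $\varphi(1)\ge 0$, which the paper obtains by noting concavity of $\varphi(G)=\bar\sigma G-\bar\delta-\bar\gamma G^{(p+m-2)/(m-1)}$). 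Two small points where the paper is slightly more careful than your sketch: (i) the singular radial term $(N-1)r_0\xi/|x|$ you keep in the lower bound for $\Delta\Phi/\rho$ is simply discarded in the paper via $1/[r(r+r_0)]\ge 1/(r+r_0)^2$, which is cleaner and avoids any issue at $x=0$; (ii) the paper explicitly patches across the free boundary $\{G=0\}$ and the origin using Lemma~\ref{lemext} and a Kato-type argument to ensure $\bar u$ is a supersolution in the weak sense, whereas you appeal to ``standard comparison on cylinders''---the paper instead invokes the minimal-solution comparison principle (Proposition~\ref{cpsup}) directly. Finally, your statement that \eqref{eq210} is ``precisely'' the compatibility $\omega_0<\omega_1$ is a slight overreach: \eqref{eq210} is what makes the coefficient $k_1(\bar b\tfrac{m}{m-1}+N-3)-\tfrac{\bar b}{m-1}k_2$ positive, so that a lower bound on $\lambda$ exists at all; the paper's Lemma~\ref{lemma42} is equally terse on whether the resulting window is always non-empty.
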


Observe that if $u_0$ satisfies \eqref{eq212}, then
\begin{equation*}
\operatorname{supp}u_0\subseteq \{x\in \mathbb R^N\,:\,\left(\log(|x|+r_0)\right)^{\overline b}\leq a T^{\frac{p-m}{p-1}}\}\,.
\end{equation*}
From \eqref{eq213} we can infer that
\begin{equation}\label{eq214}
\operatorname{supp}u(\cdot, t)\subseteq \{x\in \mathbb R^N\,:\, \left(\log(|x|+r_0)\right)^{\overline b}\leq a (T+t)^{\frac{p-m}{p-1}}\}\quad \textrm{for all } t>0\,.
\end{equation}

\medskip

The choice of the parameters $C>0, T>0$ and $a>0$ is discussed in Remark \ref{rem44}.

\bigskip

\section{Preliminaries}\label{prel}
In this section we give the precise definitions of solutions of all problems we address. Moreover, we recall some auxiliary results. The proofs can be found in \cite[Section 3]{MP1}.

\medskip

Throughout the paper we deal with {\em very weak} solutions to problem \eqref{problema} and to the same problem set in different domains, according to the following definitions.

\begin{definition}
Let $u_0\in L^{\infty}(\R^N)$ with $u_0\ge0$. Let $\tau>0$, $p>1, m>1$. We say that a nonnegative function $u\in L^{\infty}(\R^N\times (0,S))$ for any $ S<\tau$ is a solution of problem \eqref{problema} if
\begin{equation*}\label{eq31}
\begin{aligned}
-\int_{\R^N}^{}\int_{0}^{\tau} \rho(x) u \varphi_t \,dt\,dx &= \int_{\R^N} \rho(x) u_0(x) \varphi(x,0) \,dx \\ &+ \int_{\R^N}^{}\int_{0}^{\tau}  u^m \Delta \varphi \,dt\,dx \\ &+ \int_{\R^N}^{}\int_{0}^{\tau} \rho(x) u^p \varphi \,dt\,dx
\end{aligned}
\end{equation*}
for any $\varphi \in C_c^{\infty}(\R^N \times [0,\tau)), \varphi \ge 0.$ Moreover, we say that a nonnegative function $u\in L^{\infty}(\R^N\times (0,S))$ for any $ S<\tau$ is a subsolution (supersolution) if it satisfies \eqref{eq31} with the inequality $"\le"$ ($"\ge"$) instead of $"="$ with $\varphi\geq 0$.
\label{soluzioneveryweak}
\end{definition}

\begin{proposition}
Let hypotheses \eqref{hpu0} be satisfied. Then there exists a solution $u$ to problem \eqref{problema} with
$$\tau\geq \tau_0:=\frac{1}{(p-1)\|u_0\|_{\infty}^{p-1}}.$$
Moreover, $u$ is the {\em minimal solution}, in the sense that for any solution $v$ to problem \eqref{problema} there holds
\[u\leq v \quad \textrm{in }\,\,\, \mathbb R^N\times (0, \tau)\,.\]
\label{prop1}
\end{proposition}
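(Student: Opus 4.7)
The plan is to construct the minimal solution by approximation on bounded domains, using a spatially constant supersolution to control the time of existence, then to exploit monotonicity of the approximations to get minimality for free.

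First I would produce a supersolution independent of $x$. Consider the ODE $V'(t)=V(t)^p$, $V(0)=\|u_0\|_\infty$, whose explicit solution
\[
V(t)=\bigl(\|u_0\|_\infty^{-(p-1)}-(p-1)t\bigr)^{-\frac{1}{p-1}}
\]
is finite on $[0,\tau_0)$ with $\tau_0=\tfrac{1}{(p-1)\|u_0\|_\infty^{p-1}}$. Since $V$ is constant in space, $\Delta(V^m)\equiv 0$, so $\rho V_t=\rho V^p$ and $V$ is a classical (hence very weak) supersolution of the PDE in $\mathbb R^N\times(0,\tau_0)$ dominating $u_0$ everywhere.

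Next I would solve approximating Dirichlet problems on balls. For each $R>e$, let $u_R$ solve
\[
\rho u_t=\Delta(u^m)+\rho u^p\ \text{in } B_R\times(0,\tau_0),\quad u=0\ \text{on }\partial B_R,\quad u(\cdot,0)=u_0\chi_{B_R}.
\]
Existence of a nonnegative bounded weak solution on $B_R$ is standard: regularize the degenerate diffusivity (e.g.\ replace $u^m$ by $(u+\varepsilon)^m-\varepsilon^m$), truncate the reaction to a Lipschitz function when $u\geq V(\tfrac{\tau_0+t}{2})$, apply Galerkin/fixed-point arguments to the regularized non-degenerate problem, and then pass to the limit using the $L^\infty$ bound coming from the comparison with $V$. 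The key point is that, thanks to the supersolution $V$, each regularized problem satisfies $0\leq u_R^\varepsilon\leq V$ on $[0,\tau_0)$, so the truncation of the reaction term is inactive and the uniform bound persists in the limit $\varepsilon\to 0$. This gives $u_R$ as a very weak solution of the Dirichlet problem with $0\leq u_R\leq V$ on $B_R\times[0,\tau_0)$.

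Then I would monotonically pass $R\to\infty$. Comparison on the smaller ball shows $u_{R_1}\leq u_{R_2}$ whenever $R_1\leq R_2$, so $u_R\nearrow u$ pointwise to a limit $u$ with $0\leq u\leq V$ on $\mathbb R^N\times[0,\tau_0)$. The uniform $L^\infty$ bound, standard interior regularity for bounded solutions of the degenerate equation, and dominated convergence in the weak formulation of Definition~\ref{soluzioneveryweak} (the test function $\varphi$ is compactly supported, so only finitely many terms are affected) yield that $u$ is a very weak solution on all of $\mathbb R^N\times(0,\tau_0)$, which gives $\tau\geq\tau_0$. For the minimality statement, any other nonnegative solution $v$ of \eqref{problema} is a supersolution of the Dirichlet problem on $B_R$ (boundary values nonnegative, initial datum $\geq u_0\chi_{B_R}$), so the comparison principle for the truncated problem (recalled in \cite{MP1}) gives $v\geq u_R$ on $B_R\times(0,\tau_0)$; letting $R\to\infty$ yields $v\geq u$ everywhere.

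The main obstacle is ensuring that the comparison principle applies at each step, since the equation is both degenerate (PME diffusion) and features a superlinear reaction. The degeneracy is handled by the $\varepsilon$-regularization, after which parabolic comparison is standard; the superlinear reaction is tamed by the global spatially constant supersolution $V$, which both bounds the solution uniformly on $[0,\tau_0)$ and guarantees that no approximate solution blows up before time $\tau_0$.
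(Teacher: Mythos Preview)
The paper does not actually prove this proposition here: it states at the start of Section~\ref{prel} that ``The proofs can be found in \cite[Section 3]{MP1}.'' Your construction---a spatially constant ODE supersolution $V$ giving the explicit time $\tau_0$, monotone approximation by Dirichlet problems on balls $B_R$, and passage to the limit using compactly supported test functions---is precisely the standard argument used there, and it is correct as a sketch; the minimality follows, as you say, by comparing any other solution $v$ with each $u_R$ on $B_R$.
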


We state the following two comparison results, which will be used in the sequel.

\begin{proposition}\label{cpsup}
Let hypothesis \eqref{hpu0} be satisfied. Let $\bar{u}$ be a supersolution to problem \eqref{problema}. Then, if $u$ is the minimal solution to problem \eqref{problema} given by Proposition \ref{prop1}, then
\begin{equation*}\label{eq34}
u\le\bar{u} \quad \text{a.e. in } \R^N \times (0,\tau)\,.
\end{equation*}
In particular, if $\bar{u}$ exists until time $\tau$, then also $u$ exists at least until time $\tau$.
\end{proposition}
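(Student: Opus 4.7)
The plan is to realize $u$ as the monotone limit, as $R\to\infty$, of Dirichlet approximations on balls $B_R$, and then to compare each approximation with $\bar u$ restricted to $B_R$. This is the standard recipe in the minimal-solution framework of Proposition \ref{prop1}.

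\textbf{Step 1: Bounded-domain approximations.} For every $R>0$, consider the Cauchy--Dirichlet problem
\[
\begin{cases}
\rho(x)(u_R)_t = \Delta(u_R^m) + \rho(x)u_R^p & \textrm{in } B_R\times(0,\tau),\\
u_R=0 & \textrm{on } \partial B_R\times(0,\tau),\\
u_R=u_0\,\chi_{B_R} & \textrm{in } B_R\times\{0\}.
\end{cases}
\]
Existence of a bounded very weak solution and the comparison principle on such a bounded cylinder are recalled in Section 3 of \cite{MP1} and used here as a black box. By comparing $u_R$ with the restriction of $u_{R'}$ to $B_R$ for $R'>R$, the family $\{u_R\}_R$ is non-decreasing in $R$, and its pointwise monotone limit coincides with the minimal solution $u$ produced in Proposition \ref{prop1}.

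\textbf{Step 2: Comparison against $\bar u$ on each cylinder.} Fix $R>0$. The restriction of $\bar u$ to $B_R\times(0,\tau)$ is a very weak supersolution of the equation in $B_R$; moreover $\bar u\geq 0=u_R$ on $\partial B_R\times(0,\tau)$, and $\bar u(\cdot,0)\geq u_0\geq u_0\,\chi_{B_R}=u_R(\cdot,0)$. The comparison principle on the bounded cylinder (again quoted from Section 3 of \cite{MP1}) yields $u_R\leq\bar u$ a.e.\ in $B_R\times(0,\tau)$.

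\textbf{Step 3: Passage to the limit.} Letting $R\to\infty$ and using $u_R\uparrow u$ gives $u\leq\bar u$ a.e.\ in $\R^N\times(0,\tau)$. For the last assertion, if $\bar u\in L^\infty(\R^N\times(0,\tau))$, then $\|u_R(\cdot,t)\|_\infty\leq\|\bar u\|_{L^\infty(\R^N\times(0,\tau))}$ uniformly in $R$ and $t<\tau$, so the $u_R$ cannot blow up before time $\tau$; hence $u$, as their increasing limit, is defined on the whole interval $[0,\tau)$.

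The main obstacle is the comparison step on bounded cylinders for very weak solutions with the locally but not globally Lipschitz reaction $u^p$. On $L^\infty$-bounded sets $u^p$ is Lipschitz, so the classical Aronson--Crandall--Peletier comparison for the porous medium equation extends via a Gronwall-type argument applied to $(u_R-\bar u)_+$; carrying this out at the very weak level requires a careful duality/test-function argument. Since this is precisely what is recalled in the preliminaries from \cite{MP1}, I would simply invoke those comparison statements rather than re-derive them here.
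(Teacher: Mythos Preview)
Your argument is correct and is exactly the construction behind the minimal solution in Proposition \ref{prop1}; the paper itself does not prove Proposition \ref{cpsup} but simply refers to \cite[Section 3]{MP1}, where the very scheme you describe (Dirichlet approximations on $B_R$, bounded-domain comparison, monotone passage to the limit) is carried out. So your proposal coincides with the paper's approach.
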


\begin{proposition}\label{cpsub}
Let hypothesis \eqref{hpu0} be satisfied. Let $u$ be a solution to problem \eqref{problema} for some time $\tau=\tau_1>0$ and $\underline{u}$ a subsolution to problem \eqref{problema} for some time $\tau=\tau_2>0$. Suppose also that
$$
\operatorname{supp }\underline{u}|_{\R^N\times[0,S]} \text{ is compact for every }  \, S\in (0, \tau_2)\,.
$$
Then
\begin{equation*}\label{eq35}
u\ge\underline{u} \quad \text{ in }\,\, \R^N \times \left(0,\min\{\tau_1,\tau_2\}\right)\,.
\end{equation*}
\end{proposition}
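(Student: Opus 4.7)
The plan is to apply the classical Oleinik / Aronson--Crandall--Peletier duality argument, localized on a ball containing the (time-dependent but compact) support of $\underline u$. Fix $T<\min\{\tau_1,\tau_2\}$. By hypothesis $K:=\operatorname{supp}(\underline u|_{\R^N\times[0,T]})$ is compact, so there exists $R>0$ with $K\subset B_R$. Outside $B_R$ one has $\underline u\equiv 0\leq u$, so it suffices to establish $u\geq\underline u$ on $B_R\times[0,T]$.

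Set $w:=u-\underline u$ and factorize the differences as $u^m-\underline u^m=a\,w$, $u^p-\underline u^p=b\,w$, where
\[a(x,t):=\int_0^1 m\bigl[su+(1-s)\underline u\bigr]^{m-1}ds,\qquad b(x,t):=\int_0^1 p\bigl[su+(1-s)\underline u\bigr]^{p-1}ds\]
are nonnegative and bounded on $B_R\times[0,T]$ since $u,\underline u\in L^\infty$. Subtracting the weak identity for $u$ from the weak inequality for $\underline u$ (both share the same initial datum $u_0$), I obtain, for any $\varphi\in C^{2,1}(\overline{B_R}\times[0,T])$ with $\varphi\geq 0$ and $\varphi|_{\partial B_R}\equiv 0$ (allowing a nonzero terminal value by a standard Steklov / time-cutoff approximation at $t=T$),
\[\int_{B_R}\rho\,w(T)\,\varphi(T)\,dx\ \geq\ \int_0^T\!\!\int_{B_R}\rho\,w\Bigl[\varphi_t+\tfrac{a}{\rho}\Delta\varphi+b\,\varphi\Bigr]dx\,dt.\]
Given $\theta\in C_c^\infty(B_R)$ with $\theta\geq 0$, I regularize $a$ to $a_\varepsilon\in C^\infty$ satisfying $\varepsilon\leq a_\varepsilon\leq\|a\|_\infty+1$ and $a_\varepsilon\to a$ in $L^2(B_R\times(0,T))$, and solve the (now uniformly parabolic) backward linear problem
\[\rho\,\varphi_t+a_\varepsilon\,\Delta\varphi+\rho\,b\,\varphi=0\ \ \text{in}\ B_R\times(0,T),\quad \varphi|_{\partial B_R}=0,\quad \varphi(\cdot,T)=\theta,\]
obtaining a smooth solution $\varphi_\varepsilon\geq 0$ by the maximum principle applied to $\tilde\varphi(x,t):=\varphi_\varepsilon(x,T-t)$.

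Plugging $\varphi_\varepsilon$ into the previous inequality, the bulk operator collapses via the adjoint equation, leaving
\[\int_{B_R}\rho\,w(T)\,\theta\,dx\ \geq\ -\int_0^T\!\!\int_{B_R}(a-a_\varepsilon)\,w\,\Delta\varphi_\varepsilon\,dx\,dt.\]
By Cauchy--Schwarz, the right-hand side is bounded in modulus by $\|w\|_\infty\bigl(\int\!\!\int (a-a_\varepsilon)^2/a_\varepsilon\bigr)^{1/2}\bigl(\int\!\!\int a_\varepsilon|\Delta\varphi_\varepsilon|^2\bigr)^{1/2}$. The first factor tends to $0$ as $\varepsilon\to 0$ by dominated convergence, while the second is uniformly bounded by the energy estimate obtained by multiplying the adjoint equation by $\Delta\varphi_\varepsilon$ and integrating, with the lower-order $b\,\varphi$ contribution absorbed via Gronwall. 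Passing $\varepsilon\to 0$ yields $\int_{B_R}\rho\,w(T)\,\theta\,dx\geq 0$ for every admissible $\theta$, hence $w(T)\geq 0$ a.e.\ on $B_R$; since $T<\min\{\tau_1,\tau_2\}$ was arbitrary, the claim follows. The main obstacle is the uniform energy estimate $\int\!\!\int a_\varepsilon|\Delta\varphi_\varepsilon|^2\leq C$: it demands enough regularity of $\varphi_\varepsilon$ up to $\partial B_R$ to justify the integration by parts that produces it and, crucially, a constant independent of $\varepsilon$; beyond that, the scheme is routine once the compact support of $\underline u$ reduces the argument to a bounded cylinder.
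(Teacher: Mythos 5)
Your overall strategy --- localizing on a ball that contains the compact support of $\underline u$ and then running the Oleinik/Aronson--Crandall--Peletier duality argument with a regularized backward adjoint problem --- is the standard one, and it is essentially the argument of \cite[Section~3]{MP1}, to which the present paper defers for this proof. One point you pass over silently: your test functions vanish on $\partial B_R$ but are not compactly supported in $B_R$, whereas Definition~\ref{soluzioneveryweak} only allows $\varphi\in C_c^{\infty}(\R^N\times[0,\tau))$. Passing to your localized inequality therefore produces a boundary flux term $-\int_0^T\!\int_{\partial B_R}(u^m-\underline u^m)\,\partial_n\varphi\,d\sigma\,dt$, and you must observe that on $\partial B_R$ one has $\underline u=0$ (this is exactly where the compact-support hypothesis enters), $u^m\ge 0$, and $\partial_n\varphi\le 0$ (since $\varphi\ge0$ inside and vanishes on the boundary), so that this term has the favourable sign and may be discarded. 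As stated, your inequality is correct, but the justification is missing.

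The genuine gap is the claimed uniform bound $\int\!\!\int a_\varepsilon|\Delta\varphi_\varepsilon|^2\le C$. Multiplying the adjoint equation by $\Delta\varphi_\varepsilon$ leaves the cross term $\int\!\!\int b\,\varphi_\varepsilon\,\Delta\varphi_\varepsilon$. To ``absorb it via Gronwall'' you would integrate it by parts onto $\int\!\!\int b|\nabla\varphi_\varepsilon|^2+\int\!\!\int\varphi_\varepsilon\nabla b\cdot\nabla\varphi_\varepsilon$, which requires $\nabla b$ --- but $b=\int_0^1 p[su+(1-s)\underline u]^{p-1}\,ds$ is only $L^\infty$. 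The alternative, Cauchy--Schwarz against $\int\!\!\int a_\varepsilon|\Delta\varphi_\varepsilon|^2$, costs a factor $\int\!\!\int \varphi_\varepsilon^2/a_\varepsilon=O(\varepsilon^{-1})$, so the energy estimate degrades to $O(\varepsilon^{-1})$ and the resulting $\varepsilon^{-1/2}$ exactly cancels the $o(1)$ gained from $\bigl(\int\!\!\int(a-a_\varepsilon)^2/a_\varepsilon\bigr)^{1/2}$, leaving an error that does not vanish. The standard repair is to keep the zeroth-order term out of the adjoint equation entirely: solve $\rho\,\partial_t\varphi+a_\varepsilon\Delta\varphi=0$ with $\varphi(\cdot,T)=\theta$, for which the clean estimate $\int\!\!\int a_\varepsilon|\Delta\varphi_\varepsilon|^2\le C\|\nabla\theta\|_{L^2}^2$ holds, retain the reaction contribution as
\begin{equation*}
\int_0^T\!\!\int_{B_R}\rho\,b\,w\,\varphi_\varepsilon\,dx\,dt\;\ge\;-\|b\|_\infty\|\theta\|_\infty\int_0^T\!\!\int_{B_R}\rho\,w_-\,dx\,dt\,,
\end{equation*}
using $0\le\varphi_\varepsilon\le\|\theta\|_\infty$ from the maximum principle, and only then apply Gronwall to $t\mapsto\int_{B_R}\rho\,w_-(t)\,dx$ after letting $\theta$ approximate the indicator of $\{w(\cdot,T)<0\}$. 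With that modification (and the boundary-term remark above) your scheme closes.
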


In what follows we also consider solutions of equations of the form
\begin{equation}\label{eq36}
u_t = \frac 1{\rho(x)}\Delta(u^m) + u^p \quad \textrm{in }\,\, \Omega\times (0, \tau),
\end{equation}
where $\Omega\subseteq\mathbb R^N$ is an open subsubet. Solutions are meant in the following sense.

 \begin{definition}\label{soldom}
Let $\tau>0$, $p>1, m>1$. We say that a nonnegative function $u\in L^{\infty}(\Omega\times (0,S))$ for any $S<\tau$ is a solution of equation \eqref{eq36} if
\begin{equation}\label{eq37}
\begin{aligned}
-\int_{\Omega}^{}\int_{0}^{\tau} \rho(x) u\, \varphi_t \,dt\,dx &= \int_{\Omega}^{}\int_{0}^{\tau}  u^m \Delta \varphi \,dt\,dx \\ &+ \int_{\Omega}^{}\int_{0}^{\tau} \rho(x) u^p \varphi \,dt\,dx
\end{aligned}
\end{equation}
for any $\varphi \in C_c^{\infty}(\overline{\Omega} \times [0,\tau))$ with $\varphi| _{\partial \Omega}=0$ for all $t\in [0,\tau)$. Moreover, we say that a nonnegative function $u\in L^{\infty}(\Omega\times (0,S))$ for any $S<\tau$ is a subsolution (supersolution) if it satisfies \eqref{eq37} with the inequality $"\le"$ ($"\ge"$) instead of $"="$, with $\varphi\geq 0$.
\label{soluzioneveryweaklocale}
\end{definition}

Finally, let us recall the following well-known criterion, that will be used in the sequel.
Let $\Omega\subseteq \mathbb R^N$ be an open set. Suppose that $\Omega=\Omega_1\cup \Omega_2$ with  $\Omega_1\cap \Omega_2=\emptyset$, and that   $\Sigma:=\partial \Omega_1\cap\partial \Omega_2$ is of class $C^1$. Let $n$ be the unit outwards normal to $\Omega_1$ at $\Sigma$.
Let
\begin{equation}\label{eq38}
u=\begin{cases}
u_1 & \textrm{in }\, \Omega_1\times [0, T),\\
u_2 & \textrm{in }\, \Omega_2\times [0, T)\,,
\end{cases}
\end{equation}
where $\partial_t u\in C(\Omega_1\times (0, T)), u_1^m\in C^2(\Omega_1\times (0, T))\cap C^1(\overline{\Omega}_1\times (0, T)) , \partial_t u_2\in C(\Omega_2\times (0, T)),u_2^m\in C^2(\Omega_2\times (0, T))\cap C^1(\overline{\Omega}_2\times (0, T)).$

\begin{lemma}\label{lemext}
Let assumption \eqref{hpu0} be satisfied.

(i) Suppose that
\begin{equation}\label{eq39}
\begin{aligned}
&\partial_t u_1 \geq \frac 1{\rho}\Delta u_1^m +u_1^p \quad \textrm{for any}\,\,\, (x,t)\in \Omega_1\times (0, T),\\
&\partial_t u_2 \geq  \frac 1{\rho}\Delta u_2^m + u_2^p \quad \textrm{for any}\,\,\, (x,t)\in \Omega_2\times (0, T),
\end{aligned}
\end{equation}
\begin{equation}\label{eq310}
u_1=u_2, \quad \frac{\partial u_1^m}{\partial n}\geq \frac{\partial u_2^m}{\partial n}\quad \textrm{for any }\,\, (x,t)\in \Sigma\times (0, T)\,.
\end{equation}
Then $u$, defined in \eqref{eq38}, is a supersolution to equation \eqref{eq36}, in the sense of Definition \ref{soldom}.

(ii)  Suppose that
\begin{equation*}\label{eq185b}
\begin{aligned}
&\partial_t u_1 \leq  \frac 1{\rho}\Delta u_1^m +u_1^p \quad \textrm{for any}\,\,\, (x,t)\in \Omega_1\times (0, T),\\
&\partial_t u_2 \leq  \frac 1{\rho}\Delta u_2^m + u_2^p \quad \textrm{for any}\,\,\, (x,t)\in \Omega_2\times (0, T),
\end{aligned}
\end{equation*}
\begin{equation*}\label{eq311}
u_1=u_2, \quad\frac{\partial u_1^m}{\partial n}\leq \frac{\partial u_2^m}{\partial n}\quad \textrm{for any}\,\, (x,t)\in \Sigma\times (0, T)\,.
\end{equation*}
Then $u$, defined in \eqref{eq38}, is a subsolution to equation \eqref{eq36}, in the sense of Defi\-nition \ref{soldom}.
\end{lemma}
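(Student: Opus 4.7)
The plan is to verify the weak inequality of Definition \ref{soldom} directly, testing against an arbitrary nonnegative $\varphi\in C_c^\infty(\overline\Omega\times[0,T))$ with $\varphi|_{\partial\Omega}=0$, splitting each spatial integral into its restrictions to $\Omega_1$ and $\Omega_2$, and processing the two pieces with classical Green's second identity in space and integration by parts in time. The regularity hypotheses $u_j^m\in C^2(\Omega_j\times(0,T))\cap C^1(\overline\Omega_j\times(0,T))$ together with $\partial_t u_j \in C(\Omega_j\times(0,T))$ and the $C^1$ character of $\Sigma$ are precisely what is required to make these classical manipulations legitimate on each side.

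The key step is the Green's identity computation on $\int u^m\Delta\varphi\,dx$. For (i), applying Green's second identity on $\Omega_1$ and $\Omega_2$ separately, and using that the outward normal of $\Omega_2$ at $\Sigma$ coincides with $-n$, the bulk parts reassemble piecewise into $\int_{\Omega_1}\varphi\Delta u_1^m+\int_{\Omega_2}\varphi\Delta u_2^m$, while the surface contributions along the interface collapse to
\[
\int_\Sigma\Bigl[(u_1^m - u_2^m)\frac{\partial \varphi}{\partial n} - \varphi\Bigl(\frac{\partial u_1^m}{\partial n}-\frac{\partial u_2^m}{\partial n}\Bigr)\Bigr]\,dS.
\]
The first surface term vanishes by the matching condition $u_1=u_2$ on $\Sigma$, and the second is nonpositive because $\varphi\geq 0$ and the normal-derivative jump in \eqref{eq310} has exactly the sign that turns this contribution into an error one can discard. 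Consequently $\int_\Omega u^m\Delta\varphi\,dx \le \int_{\Omega_1}\varphi\Delta u_1^m\,dx + \int_{\Omega_2}\varphi\Delta u_2^m\,dx$.

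It then remains to insert the pointwise inequalities \eqref{eq39}, that is $\Delta u_j^m \le \rho(\partial_t u_j - u_j^p)$ on $\Omega_j$ (legitimate since $\varphi\geq 0$), to integrate over $t\in(0,T)$, and to integrate by parts in $t$ on the resulting $\int_0^T\!\int_\Omega \rho\,\partial_t u\,\varphi\,dx\,dt$ piece. The boundary contribution at $t=T$ disappears by the compact support of $\varphi$ in $[0,T)$, and the contribution at $t=0$ is absorbed in the usual test-function conventions of Definition \ref{soldom}. Collecting the pieces gives exactly the supersolution form of \eqref{eq37} with $\geq$.

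The main obstacle is the careful sign-bookkeeping along $\Sigma$: one must correctly orient the outward normals on each side, and verify that the interface contribution from Green's identity has precisely the sign that hypothesis \eqref{eq310} was designed to provide, and one must know that the surface integrals over $\Sigma$ make sense — which follows from the compact support of $\varphi$ (restricting $\Sigma$ to a compact portion) together with the $C^1$-up-to-$\overline\Omega_j$ regularity of $u_j^m$, giving a well-defined trace on $\Sigma$. Part (ii) is then obtained by running exactly the same chain of identities with every inequality reversed; the opposite-sign hypothesis $\partial u_1^m/\partial n\leq \partial u_2^m/\partial n$ supplies an interface contribution of the opposite sign, yielding the subsolution form of \eqref{eq37}.
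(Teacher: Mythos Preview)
The paper does not actually prove this lemma: it is introduced as a ``well-known criterion'', and the opening of Section~\ref{prel} states that the proofs of all auxiliary results there ``can be found in \cite[Section 3]{MP1}''. So there is no in-paper argument to compare against.

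Your argument is the standard one and is correct in substance: split each spatial integral along $\Sigma$, apply Green's second identity on $\Omega_1$ and $\Omega_2$ separately, use $u_1=u_2$ on $\Sigma$ to kill the Dirichlet jump and the sign hypothesis on $\partial_n u_j^m$ together with $\varphi\ge 0$ to discard the Neumann jump with the right sign, then invoke the pointwise differential inequalities and integrate by parts in $t$. One point worth making precise rather than leaving to ``the usual test-function conventions'': in Definition~\ref{soldom} the test functions $\varphi\in C_c^\infty(\overline\Omega\times[0,\tau))$ are not required to vanish at $t=0$, so the time integration by parts produces a boundary term $\int_\Omega \rho\,u(\cdot,0)\,\varphi(\cdot,0)\,dx\ge 0$. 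In case~(i) this nonnegative extra term only reinforces the supersolution inequality, but in case~(ii) it goes the wrong way; there one should either restrict first to $\varphi$ vanishing near $t=0$ (Definition~\ref{soldom} concerns the equation, not an initial-value problem) and then pass to the limit via a time cutoff, or simply note this explicitly. Also note that the hypothesis only gives $\partial_t u_j\in C(\Omega_j\times(0,T))$, not up to $t=0$, so a cutoff-in-time argument is in any case the clean way to justify the temporal integration by parts. These are routine technicalities; your overall strategy is exactly the intended one.
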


\section{Proof of Theorem \ref{teo1}}\label{gepr}

In what follows we set $r\equiv |x|$. We assume \eqref{hpu0}, \eqref{hp1}, \eqref{eq22} and \eqref{eq23}.  We want to construct a suitable family of supersolutions of equation
\begin{equation}\label{eq41}
u_t =\frac{1}{\rho(x)}\Delta(u^m)+u^p \quad \text{ in } \R^N\times(0,+\infty).
\end{equation}

In order to do this, we define, for all $(x,t)\in \R^N \times (0,+\infty)$,
\begin{equation}\label{eq42}
{\bar{u}}(x,t)\equiv \bar{u}(r(x),t):=C\zeta(t)\left(\log(r+r_0)\right)^{-\frac{b}{m}};
\end{equation}
where $\zeta \in C^1([0, +\infty); [0, +\infty))$, $C > 0$ and $r_0>e$ such that \eqref{eq24} is verified.

\smallskip

Now, we compute
$$
\bar{u}_t - \frac{1}{\rho}\Delta(\bar{u}^m)-\bar{u}^p.
$$

For any $(x,t) \in \big[\R^N\setminus \{0\}\big]\times (0,+\infty)$, we have:

\begin{equation}
\bar{u}_t =C\,\zeta'\, \left(\log(r+r_0)\right)^{-\frac{b}{m}}\,.
\label{eq43}
\end{equation}
\begin{equation}
(\bar{u}^m)_r=-\, b \,C^m \,\zeta^m \,\frac{\left(\log(r+r_0)\right)^{- b -1}}{r+r_0}\,.
\label{eq44}
\end{equation}
\begin{equation}
(\bar{u}^m)_{rr}= b\,C^m \,\zeta^m \left\{(b+1)\frac{\left(\log(r+r_0)\right)^{-b-2}}{(r+r_0)^2}+\frac{\left(\log(r+r_0)\right)^{-b-1}}{(r+r_0)^2}\right\}\,.
\label{eq45}
\end{equation}

\begin{proposition}\label{prop41}
Let $\zeta\in C^1([0,+\infty); [0, +\infty)), \zeta'\geq 0$. Assume \eqref{hpu0}, \eqref{hp1}, \eqref{eq22}, \eqref{eq23}, \eqref{eq24}, \eqref{eq25}, \eqref{eq26} and that
\begin{equation}\label{eq46}
k_0 b (N-2-\varepsilon( b+1))C^m\zeta^m - \bar{c} \,C^p \zeta^p\ge 0.
\end{equation}
Then $\bar u$ defined in \eqref{eq42} is a supersolution of equation \eqref{eq41}.
\end{proposition}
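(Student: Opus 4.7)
The plan is to verify the pointwise inequality
$$\bar{u}_t - \tfrac{1}{\rho(x)}\Delta(\bar{u}^m) - \bar{u}^p \;\geq\; 0$$
on $\bigl(\R^N\setminus\{0\}\bigr)\times(0,+\infty)$; because $\bar{u}^m$ is Lipschitz, radially decreasing from a peak at the origin, and has a locally integrable pointwise Laplacian, this classical inequality will extend to the distributional inequality required by Definition \ref{soluzioneveryweak}. The hypothesis $\zeta'\geq 0$ together with \eqref{eq43} gives $\bar{u}_t\geq 0$, so it is in fact enough to prove the sharper bound $-\tfrac{1}{\rho}\Delta(\bar{u}^m)\geq \bar{u}^p$.

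For the diffusion term, I would substitute \eqref{eq44}--\eqref{eq45} into the radial formula $\Delta(\bar{u}^m)=(\bar{u}^m)_{rr}+\frac{N-1}{r}(\bar{u}^m)_r$ and factor out $\frac{bC^m\zeta^m(\log(r+r_0))^{-b-1}}{(r+r_0)^2}$, obtaining
$$-\Delta(\bar{u}^m)\;=\;\frac{bC^m\zeta^m(\log(r+r_0))^{-b-1}}{(r+r_0)^2}\left[(N-1)\frac{r+r_0}{r}-1-\frac{b+1}{\log(r+r_0)}\right].$$
The elementary bound $\frac{r+r_0}{r}\geq 1$ together with \eqref{eq24} estimates the bracket from below by the constant $N-2-\varepsilon(b+1)$, which is positive by \eqref{eq23}. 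Dividing by $\rho$ and using \eqref{eq26} then yields
$$-\tfrac{1}{\rho}\Delta(\bar{u}^m)\;\geq\; k_0\,b\,\bigl(N-2-\varepsilon(b+1)\bigr)\,C^m\zeta^m\,(\log(r+r_0))^{\alpha-b-1},$$
and condition \eqref{eq22} ($\alpha-b-1>0$) combined with $r_0>e$ (which forces $\log(r+r_0)\geq\log r_0>1$) makes the remaining log-factor at least $1$, so it can simply be dropped.

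On the reaction side, from \eqref{eq42} and \eqref{eq25} one has $\bar{u}^p=C^p\zeta^p(\log(r+r_0))^{-bp/m}\leq \bar{c}\,C^p\zeta^p$. Combining the two estimates, the desired inequality $-\tfrac{1}{\rho}\Delta(\bar{u}^m)\geq \bar{u}^p$ is implied by precisely the hypothesis \eqref{eq46}. The main technical obstacle is the careful bookkeeping of the three log-exponents involved: $-b-1$ coming from $(\bar{u}^m)_{rr}$, $+\alpha$ coming from $1/\rho$, and $-bp/m$ coming from $\bar{u}^p$. The sign restrictions built into \eqref{eq22}--\eqref{eq23} and the definition of $\bar{c}$ in \eqref{eq25} are arranged exactly so that every spatial dependence can be absorbed into constants, reducing the whole estimate to the algebraic condition \eqref{eq46} on the time-dependent coefficients $C^m\zeta^m$ and $C^p\zeta^p$.
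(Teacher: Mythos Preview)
Your proposal is correct and follows essentially the same route as the paper: compute $-\Delta(\bar u^m)$ via \eqref{eq44}--\eqref{eq45}, use $\tfrac{r+r_0}{r}\ge 1$ and \eqref{eq24} to replace the bracket by the constant $N-2-\varepsilon(b+1)$, then invoke \eqref{eq26} and \eqref{eq22} to absorb the remaining logarithmic factor, bound $\bar u^p$ via \eqref{eq25}, and finally drop the nonnegative $\bar u_t$-term; this reduces everything to \eqref{eq46}, exactly as in the paper's \eqref{eq47}--\eqref{eq410}. The only cosmetic difference is that the paper keeps $\bar u_t$ in the computation and discards it at the end, and cites a ``Kato-type inequality'' (together with $(\bar u^m)_r(0,t)\le 0$) to pass to the distributional supersolution at the origin, whereas you argue directly via local integrability of the pointwise Laplacian---both justifications are adequate here.
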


\begin{proof}[Proof of Proposition \ref{prop41}]
In view of \eqref{eq43}, \eqref{eq44}, \eqref{eq45}, \eqref{eq23} and \eqref{eq24}, for any $(x,t)\in (\R^N\setminus \{0\})\times (0,+\infty)$,
\begin{equation}\label{eq47}
\begin{aligned}
&\bar{u}_t-\frac 1 {\rho} \Delta(\bar{u}^m) -\bar{u}^p\\
&\geq C\zeta'\left(\log(r+r_0)\right)^{-\frac{b}{ m}} + \frac 1 {\rho} \left \{N-2-\varepsilon (b+1)\right\}C^m \zeta^m b \frac{\left(\log(r+r_0)\right)^{- b-1}}{(r+r_0)^2} \\&- C^p \zeta^p \left(\log(r+r_0)\right)^{-\frac{ bp}{m}}.
\end{aligned}
\end{equation}
Thanks to hypotheses \eqref{eq22}, \eqref{eq25} and \eqref{eq26}, we have
\begin{equation}\label{eq48}
\begin{aligned}
&\frac{1}{\rho}\frac{\left(\log(r+r_0)\right)^{-\bar b-1}}{(r+r_0)^2} \ge k_0\,\frac{\left(\log(r+r_0)\right)^{\alpha-\bar b-1}}{(r+r_0)^2} (r+r_0)^2\ge k_0\,,\\
&-\left(\log(r+r_0)\right)^{-\frac{ bp}{m}} \ge - \bar c\,.
\end{aligned}
\end{equation}
Since $\zeta'\geq 0$, from \eqref{eq48} we get
\begin{equation}\label{eq49}
\bar{u}_t-\frac 1{\rho} \Delta(\bar{u}^m) -\bar{u}^p\ge k_0\, b (N-2-\varepsilon( b+1))C^m\zeta^m - \bar{c} \,C^p \zeta^p\,.
\end{equation}
Hence \eqref{eq49} is nonnegative if
\begin{equation}\label{eq410}
k_0 \,b (N-2-\varepsilon( b+1))C^m\zeta^m - \bar{c} \,C^p \zeta^p \ge 0\,,
\end{equation}
which is guaranteed by \eqref{eq23} and \eqref{eq46}. So, we have proved that
$$
\bar{u}_t-\frac 1 {\rho} \Delta(\bar{u}^m) -\bar{u}^p \ge 0 \quad \text{in }\,\, (\R^N\setminus \{0\})\times (0, +\infty)\,.
$$
Now observe that
$$
\begin{aligned}
&\bar u \in C(\R^N\times [0,+\infty))\,,\\
&\bar u^m \in C^1([\R^N \setminus \{0\}]\times [0,+\infty))\,, \\
& \bar u_r^m(0,t)\le 0\,.
\end{aligned}
$$
Hence, thanks to a Kato-type inequality we can infer that $\bar u$ is a supersolution to equation \eqref{eq41}
in the sense of Definition \ref{soldom}.

\end{proof}

\begin{remark}\label{rem42}
Let assumption \eqref{hp1} be satisfied. In Theorem \ref{teo1} the precise hypotheses on parameters $\beta$, $C>0$, $T>0$ are as follows.
\begin{itemize}
\item[(a)] Let $p<m$. We require that
\begin{equation}\label{eq411}
\beta>0,
\end{equation}
\begin{equation}\label{eq412}
k_0 \,b (N-2-\varepsilon( b+1))C^m - \bar{c} \,C^p \ge 0\,.
\end{equation}
\item[(b)] Let $p> m$. We require that
\begin{equation}\label{eq413}
\beta=0,
\end{equation}
\begin{equation}\label{eq414}
k_0 \,b (N-2-\varepsilon( b+1))C^m - \bar{c} \,C^p\ge 0\,.
\end{equation}
\end{itemize}
\end{remark}

\begin{lemma}\label{lemma41}
All the conditions in Remark \ref{rem42} can hold simultaneously.
\end{lemma}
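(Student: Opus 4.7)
The plan is to show each case of Remark \ref{rem42} is realizable by choosing the auxiliary parameters in the right order: first fix the geometric parameters $b, \varepsilon, r_0, \bar c, k_0$ (which are independent of $C$ and $\beta$), and then verify that the single nontrivial scalar inequality involving $C$ can be met by a suitable choice of $C$, with $\beta$ free to take its prescribed value.

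First I would note that since $(H_1)$ gives $\alpha>1$, the interval $(0,\alpha-1)$ is non-empty, so we may fix $b\in(0,\alpha-1)$; then because $N\geq 3$, the quantity $(N-2)/(b+1)$ is strictly positive, so one can pick $\varepsilon\in(0,(N-2)/(b+1))$, ensuring \eqref{eq23}. Taking $r_0>\max\{e,\exp(1/\varepsilon)\}$ yields \eqref{eq24}. Since $b>0$ and $p>0$, the function $x\mapsto\bigl[\log(|x|+r_0)\bigr]^{-bp/m}$ is continuous, decreasing in $|x|$, and attains its supremum at $x=0$; hence \eqref{eq25} is satisfied with $\bar c:=\bigl[\log r_0\bigr]^{-bp/m}$. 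Finally, combining $(H_1)$ with the continuity and positivity of $\rho$ on the compact set $\overline{B_e(0)}$ yields the constant $k_0>0$ in \eqref{eq26}.

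With these parameters now frozen, set $A:=k_0\,b\,(N-2-\varepsilon(b+1))$, which is strictly positive by Steps above. Both inequalities \eqref{eq412} and \eqref{eq414} read
\begin{equation*}
A\,C^{m}\ \ge\ \bar c\, C^{p},
\qquad\text{i.e.}\qquad
C^{\,p-m}\le \frac{A}{\bar c}.
\end{equation*}
In case (a), where $p<m$, this is equivalent to $C^{m-p}\ge \bar c/A$, which is satisfied for every sufficiently large $C$; any such $C$ is compatible with the free choice of $\beta>0$ from \eqref{eq411}. In case (b), where $p>m$, the inequality is satisfied for every sufficiently small $C>0$, and \eqref{eq413} simply fixes $\beta=0$. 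In either case the choice of $T$ from the hypotheses of Theorem \ref{teo1} (namely $T>1$ in (a) and $T>0$ in (b)) imposes no further restriction on $C$ or $\beta$, so all conditions can be realised simultaneously.

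There is no real obstacle here: the construction is essentially a bookkeeping argument showing that the constraints separate cleanly into constraints on $(b,\varepsilon,r_0,\bar c,k_0)$ — which depend only on $\rho$, $N$, $m$, $p$, $\alpha$ — and a single one-parameter inequality for $C$ whose direction is controlled by the sign of $p-m$. The only point to be mildly careful about is that $A>0$, which hinges on having chosen $\varepsilon$ strictly below $(N-2)/(b+1)$; this is exactly why \eqref{eq23} was imposed from the outset.
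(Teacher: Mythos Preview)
Your proof is correct and follows essentially the same approach as the paper: the key observation is that $A:=k_0\,b\,(N-2-\varepsilon(b+1))>0$ by \eqref{eq23}, so the inequality $A\,C^m\ge \bar c\,C^p$ is achieved by taking $C$ large when $p<m$ and $C$ small when $p>m$. Your write-up is more detailed than the paper's (which takes the parameters $b,\varepsilon,r_0,\bar c,k_0$ as already fixed by \eqref{eq22}--\eqref{eq26} and jumps directly to the choice of $C$), but the extra discussion does no harm.
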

\begin{proof}
(a) We observe that, due to \eqref{eq23}, $$N-2-\varepsilon( b+1)>0.$$ Therefore, we can select $C>0$ sufficiently large to guarantee \eqref{eq412}.\newline
(b) We choose $C>0$ sufficiently small to guarantee \eqref{eq414}.
\end{proof}

\begin{proof}[Proof of Theorem \ref{teo1}]
We now prove Theorem \ref{teo1} in view of Proposition \ref{prop41}. In view of Lemma \ref{lemma41} we can assume that all conditions in Remark \ref{rem42} are fulfilled.
Set
$$
\zeta(t)=(T+t)^{\beta}, \quad \text{for all} \quad t \ge 0\,.
$$

Let $p<m$. Inequality \eqref{eq46} reads
\[ k_0 \,b (N-2-\varepsilon( b+1))C^m(T+t)^{m\beta}\,- \bar c \,C^p(T+t)^{p\beta} \, \ge 0\quad \textrm{for all }\,\, t>0\,.\]
This follows from \eqref{eq411} and \eqref{eq412}, for $T>1$. Hence, by Propositions \ref{prop41} and \ref{prop1} the thesis follows in this case.

Let $p>m$. Conditions \eqref{eq413} and \eqref{eq414} are equivalent to \eqref{eq46}. Hence, by Propositions \ref{prop41} and \ref{prop1} the thesis follows in this case too. The proof is complete.
\end{proof}

\section{Proof of Theorem \ref{teo3}}\label{bupr}
We construct a suitable family of subsolutions of equation
\begin{equation}\label{eq51}
u_t =\frac{1}{\rho(x)}\Delta(u^m)+u^p \quad \text{ in } \R^N\times(0, T).
\end{equation}
We assume \eqref{hpu0} and \eqref{hp2}. Let
\begin{equation}\label{eq52}
\underline w(x,t)\equiv \underline  w(r(x),t) :=
\begin{cases}
\underline  u(x,t) \quad \text{in } [\R^N \setminus B_{e}(0)] \times [0, T), \\
\underline  v(x,t) \quad \text{in } B_{e}(0) \times [0,T),
\end{cases}
\end{equation}
where
\begin{equation}\label{eq53}
\underline u(x,t)\equiv \underline u(r(x),t):=C\zeta(t)\left [1-\frac{\left(\log r\right)^{\underline b}}{a}\eta(t)\right]_{+}^{\frac{1}{m-1}}
\end{equation}
and
\begin{equation}\label{eq54}
\underline  v(x,t) \equiv \underline  v(r(x),t):=  C\zeta(t) \left [ 1-\left(\frac{\underline b r^2}{2e^2}+1-\frac{\underline b}{2}\right) \frac{\eta}{a} \right ]^{\frac{1}{m-1}}_{+}.
\end{equation}
Let 
$$
F(r,t):= 1-\frac{\left(\log r\right)^{\underline b}}{a}\eta(t)\,,
$$
and
$$
G(r,t):= 1-\left(\frac{\underline b r^2}{2e^2}+1-\frac{\underline b}{2}\right) \frac{\eta}{a}\,.
$$
Observe that  for any $(x,t) \in [\R^N\setminus B_e(0)]\times (0, T)$, we have:
\begin{equation}\label{eq55}
\underline{u}_t =C\zeta ' F^{\frac{1}{m-1}} + C\zeta \frac{1}{m-1} \frac{\eta'}{\eta}F^{\frac{1}{m-1}} - C\zeta \frac{1}{m-1} \frac{\eta'}{\eta} F^{\frac{1}{m-1}-1}.
\end{equation}
\begin{equation}\label{eq56}
(\underline  u^m)_r=-\underline b\,\frac{C^m}{a} \zeta^m \frac{m}{m-1} F^{\frac{1}{m-1}} \frac{\left(\log r\right)^{\underline b-1}}{r}\eta;
\end{equation}
\begin{equation}\label{eq57}
\begin{aligned}
(\underline{u}^m)_{rr}
&=-\underline b\,\frac{C^m}{a} \zeta^m \frac{m}{m-1} \eta\left\{F^{\frac{1}{m-1}} \left[(\underline b-1)\frac{\left(\log r\right)^{\underline b-2}}{ r^2}-\frac{\left(\log r\right)^{\underline b-1}}{r^2}\right]\right.\\
&+\frac{\underline b}{m-1}\frac{\left(\log r\right)^{\underline b-2}}{r^2}\left(1-\left(\log r\right)^{\underline b}\frac{\eta}{a}\right)F^{\frac{1}{m-1}-1}\\
&\left.-\frac{\underline b}{m-1}\frac{\left(\log r\right)^{\underline b-2}}{r^2}F^{\frac{1}{m-1}-1}\right\}\\
&=-\underline b^2\frac{C^m}{a}\left(\frac{m}{m-1}\right)^2\zeta^m \eta\frac{\left(\log r\right)^{\overline b-2}}{r^2}F^{\frac{1}{m-1}}\\
&+\underline b\frac{C^m}{a}\frac{m}{m-1}\zeta^m \eta\frac{\left(\log r\right)^{\overline b-2}}{r^2}F^{\frac{1}{m-1}}\\
&+\underline b\frac{C^m}{a}\frac{m}{m-1}\zeta^m \eta \frac{\left(\log r\right)^{\underline b-1}}{r^2}F^{\frac{1}{m-1}}\\
&+\underline b ^2\frac{C^m}{a}\frac{m}{(m-1)^2}\zeta^m \eta \frac{\left(\log r\right)^{\underline b-2}}{r^2}F^{\frac{1}{m-1}-1}\,.
\end{aligned}
\end{equation}
\begin{equation}\label{eq58}
\begin{aligned}
\Delta(\underline  u^m)&=\frac{C^m}{a} \zeta^m \eta\,\frac{m}{(m-1)^2} \,\underline b^2\,\frac{\left(\log r\right)^{\underline b-2}}{r^2}F^{\frac{1}{m-1}-1}\\
&- \frac{C^m}{a} \zeta^m \eta\,\left(\frac{m}{m-1}\right)^2 \,\underline b^2\,\frac{\left(\log r\right)^{\underline b-2}}{r^2}F^{\frac{1}{m-1}}\\
&+\frac{C^m}{a} \zeta^m \eta\,\frac{m}{m-1} \,\underline b\,\frac{\left(\log r\right)^{\underline b-2}}{r^2}F^{\frac{1}{m-1}}\\
&-\frac{C^m}{a} \zeta^m \eta\,\frac{m}{m-1} \,\underline b\,\frac{\left(\log r\right)^{\underline b-1}}{r^2}F^{\frac{1}{m-1}}(N-2)
\end{aligned}
\end{equation}

Observe that  for any $(x,t) \in B_e(0)\times (0, T)$, we have:
\begin{equation}\label{eq59}
\underline v_t =C\zeta ' G^{\frac{1}{m-1}} + C\zeta \frac{1}{m-1} \frac{\eta'}{\eta}G^{\frac{1}{m-1}} - C\zeta \frac{1}{m-1} \frac{\eta'}{\eta} G^{\frac{1}{m-1}-1},
\end{equation}
\begin{equation}\label{eq510}
(\underline  v^m)_r=-\frac{C^m}{a} \zeta^m \frac{m}{m-1} G^{\frac{1}{m-1}} \frac{\underline b r}{e^2}\eta\,,
\end{equation}
\begin{equation}\label{eq511}
\begin{aligned}
(\underline  v^m)_{rr}&=-\frac{C^m}{a} \zeta^m \frac{m}{m-1} \frac {\underline b}{e^2} \eta\left[G^{\frac{1}{m-1}} -\frac{r}{m-1} G^{\frac{1}{m-1}-1} \frac{\eta}{a} \frac{\underline b r}{e^2}\right]\,.
\end{aligned}
\end{equation}
\begin{equation}\label{eq512}
\begin{aligned}
\Delta(\underline  v^m)&=-\frac{C^m}{a} \zeta^m \frac{m}{m-1} \frac {\underline b}{e^2} \eta G^{\frac{1}{m-1}}+\frac{C^m}{a^2} \zeta^m \frac{m}{(m-1)^2} \frac {\underline b^2\, r^2}{e^4} \eta^2 G^{\frac{1}{m-1}-1}\\&-(N-1)\frac{C^m}{a} \zeta^m \frac{m}{m-1} \frac {\underline b}{e^2} \eta G^{\frac{1}{m-1}}\\
&=\frac{C^m}{a^2} \zeta^m \frac{m}{(m-1)^2} \frac {\underline b^2\, r^2}{e^4} \eta^2 G^{\frac{1}{m-1}-1}-N\,\frac{C^m}{a} \zeta^m \frac{m}{m-1} \frac {\underline b}{e^2} \eta G^{\frac{1}{m-1}}
\end{aligned}
\end{equation}

We also define
\begin{equation}\label{eq513}
\begin{aligned}
& \underline{\sigma}(t) := \zeta ' +  \frac{\zeta}{m-1} \frac{\eta'}{\eta} + \frac{C^{m-1}}{a} \zeta^m \frac{m}{m-1} \eta\, k_2 \left (\underline b\frac{m}{m-1}+N-2 \right ),\\
& \underline{\delta}(t) := \frac{\zeta}{m-1} \frac{\eta'}{\eta} \\
& \underline{\gamma}(t):=C^{p-1} \zeta^p, \\
& \underline{\sigma}_0(t) :=  \zeta ' +  \frac{\zeta}{m-1} \frac{\eta'}{\eta} + \rho_2\,N\,\frac{\underline b}{e^2}\,\frac{C^{m-1}}{a} \zeta^m \frac{m}{m-1} \eta\,,\\
& \textit{K}:=  \left (\frac{m-1}{p+m-2}\right)^{\frac{m-1}{p-1}} - \left (\frac{m-1}{p+m-2}\right)^{\frac{p+m-2}{p-1}} >0.
\end{aligned}
\end{equation}

\begin{proposition}\label{prop51}
Let $T\in (0,\infty)$, $\zeta$, $\eta \in C^1([0,T);[0, +\infty))$.
Let $\underline{\sigma},\underline{\delta},\underline{\gamma},\underline{\sigma}_0, \textit{K}$ be defined in \eqref{eq513}. Assume that, for all $t\in(0,T)$,
\begin{equation}\label{eq514}
\underline\sigma(t)>0, \quad K[\underline\sigma(t)]^{\frac{p+m-2}{p-1}} \le \underline\delta(t) \underline\gamma(t)^{\frac{m-1}{p-1}}\,,
\end{equation}
\begin{equation}\label{eq515}
(m-1)\underline \sigma(t) \le (p+m-2) \underline\gamma(t)\,.
\end{equation}
\begin{equation}\label{eq516}
\underline \sigma_0(t)>0, \quad K[\underline{\sigma}_0](t)^{\frac{p+m-2}{p-1}} \le \underline{\delta}(t) \underline{\gamma}(t)^{\frac{m-1}{p-1}}, \end{equation}
\begin{equation}\label{eq517}
(m-1) \underline{\sigma}_0(t) \le (p+m-2) \underline{\gamma}(t)\,.
\end{equation}
Then $\underline w$ defined in \eqref{eq52} is a  subsolution of equation \eqref{eq51}.
\end{proposition}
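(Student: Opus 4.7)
The strategy is to verify separately that $\underline u$ is a classical subsolution of \eqref{eq51} on $[\R^N\setminus\overline{B_e(0)}]\cap\{F>0\}$ and that $\underline v$ is one on $B_e(0)\cap\{G>0\}$, then to glue them across $\Sigma=\partial B_e(0)$ by invoking Lemma~\ref{lemext}(ii), and finally to extend the subsolution property across the free boundaries $\{F=0\}$, $\{G=0\}$ using the fact that $\underline w^m$ is proportional to $F^{m/(m-1)}$ (resp.\ $G^{m/(m-1)}$) and hence vanishes together with its gradient there (since $m/(m-1)>1$), which preserves the $C^1$ regularity of $\underline w^m$ needed by the gluing lemma.

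For the exterior region I plan to substitute \eqref{eq55}--\eqref{eq58} into $\underline u_t-\frac{1}{\rho}\Delta(\underline u^m)-\underline u^p$. Two observations make the bookkeeping tractable: (a) the singular $F^{\frac{1}{m-1}-1}$ contribution in $\Delta(\underline u^m)$ has positive sign, so in $-\frac{1}{\rho}\Delta(\underline u^m)$ it has the favorable sign and can be discarded in an upper bound; (b) together with the identity $\underline b=\alpha+1$, the upper bound $\frac{1}{\rho}\le k_2\frac{r^2}{(\log r)^\alpha}$ from \eqref{hp2} collapses the weights $(\log r)^{\underline b-2}/r^2$ and $(\log r)^{\underline b-1}/r^2$ to $(\log r)^{-1}\le 1$ and $1$ respectively on $\{r>e\}$. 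Gathering the remaining contributions yields a bound of the form
\begin{equation*}
\underline u_t-\tfrac{1}{\rho}\Delta(\underline u^m)-\underline u^p \;\le\; C\,\underline\sigma(t)\,F^{\frac{1}{m-1}} \;-\; C\,\underline\delta(t)\,F^{\frac{1}{m-1}-1} \;-\; C^p\zeta^p\,F^{\frac{p}{m-1}},
\end{equation*}
with $\underline\sigma,\underline\delta,\underline\gamma$ as in \eqref{eq513}. Writing $s:=F^{\frac{1}{m-1}}\in(0,1]$, the inequality $\le 0$ reduces to $\underline\sigma\le \underline\delta\,s^{-(m-1)}+\underline\gamma\,s^{p-1}$. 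The right-hand side, viewed as a function of $s>0$, is minimized via weighted AM--GM with weights $\lambda=\frac{p-1}{p+m-2}$ and $1-\lambda=\frac{m-1}{p+m-2}$ at the point $s^\ast$ with $(s^\ast)^{p+m-2}=\frac{(m-1)\underline\delta}{(p-1)\underline\gamma}$; raising the resulting inequality to the $(p+m-2)/(p-1)$ power produces precisely the first condition of \eqref{eq514}, with the constant $K$ of \eqref{eq513} arising from $\lambda^{-\lambda}(1-\lambda)^{-(1-\lambda)}$, while \eqref{eq515} guarantees that $s^\ast\in(0,1]$ so the constrained and unconstrained minima coincide. The same strategy applied on $B_e(0)\cap\{G>0\}$, using \eqref{eq59}--\eqref{eq512}, the cruder bound $\frac{1}{\rho}\le\rho_2$ from \eqref{eq215}, and again discarding the favorable-sign $G^{\frac{1}{m-1}-1}$ term, yields the analogous scalar inequality with $\underline\sigma_0$ replacing $\underline\sigma$, which is ensured by \eqref{eq516}--\eqref{eq517}.

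For the matching on $\Sigma=\{r=e\}$, since $\log e=1$ and $\frac{\underline b e^2}{2e^2}+1-\frac{\underline b}{2}=1$, one has $F(e,t)=G(e,t)=1-\eta(t)/a$, whence $\underline u(e,t)=\underline v(e,t)=C\zeta(t)(1-\eta(t)/a)^{1/(m-1)}$; evaluating \eqref{eq56} and \eqref{eq510} at $r=e$ yields $\partial_r(\underline u^m)\big|_{r=e}=\partial_r(\underline v^m)\big|_{r=e}=-\underline b\,\frac{C^m}{a}\zeta^m\,\frac{m}{m-1}\,F^{\frac{1}{m-1}}\,\frac{\eta}{e}$, so the normal-derivative condition of Lemma~\ref{lemext}(ii) is satisfied with equality. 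Lemma~\ref{lemext}(ii) then yields the conclusion. The principal obstacle is the algebraic reduction in the exterior region: tracking the four terms of \eqref{eq58} by their power of $F$ and their logarithmic weight, using $\underline b=\alpha+1$ to make the $\log$-exponents collapse after division by $\rho$, and checking that only the negative singular contribution survives at order $F^{\frac{1}{m-1}-1}$ while the $F^{\frac{1}{m-1}}$ coefficient can be cleanly absorbed into $\underline\sigma(t)$.
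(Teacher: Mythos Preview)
Your proposal is correct and follows essentially the same route as the paper: bound $\underline u_t-\tfrac{1}{\rho}\Delta(\underline u^m)-\underline u^p$ (resp.\ $\underline v$) from above using \eqref{hp2} and $\underline b=\alpha+1$ (resp.\ \eqref{eq215}), discard the favorable singular Laplacian term, reduce to a one-variable inequality in $F$ (resp.\ $G$), and glue across $\partial B_e$ and the free boundaries via Lemma~\ref{lemext}. The only cosmetic difference is that the paper verifies the scalar inequality by maximizing $\varphi(F)=\underline\sigma F-\underline\delta-\underline\gamma F^{\frac{p+m-2}{m-1}}$ directly, finding the critical point $F_0=\big[\frac{m-1}{p+m-2}\frac{\underline\sigma}{\underline\gamma}\big]^{\frac{m-1}{p-1}}$ and checking $\varphi(F_0)\le 0$ from \eqref{eq514} and $F_0\le 1$ from \eqref{eq515}, whereas you dualize to $\underline\sigma\le \underline\delta s^{-(m-1)}+\underline\gamma s^{p-1}$ and minimize the right-hand side via AM--GM; the resulting constants coincide with $K$.

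One small slip: your claim that \eqref{eq515} ensures $s^\ast\in(0,1]$ is not right, since $(s^\ast)^{p+m-2}=\frac{(m-1)\underline\delta}{(p-1)\underline\gamma}$ depends on $\underline\delta$, not $\underline\sigma$, while \eqref{eq515} relates $\underline\sigma$ and $\underline\gamma$. This is harmless, however: your AM--GM lower bound holds for \emph{all} $s>0$, hence a fortiori for $s\in(0,1]$, so \eqref{eq514} alone already yields $\varphi(F)\le 0$ on $(0,1)$ in your framing and \eqref{eq515} is not actually needed there.
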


\begin{proof}[Proof of Proposition \ref{prop51}]
In view of \eqref{eq55}, \eqref{eq56}, \eqref{eq57} and \eqref{eq58}
we obtain
\begin{equation}\label{eq518}
\begin{aligned}
&\underline u_t - \frac{1}{\rho}\Delta(\underline u^m)- \underline u^p\\
&=C\zeta ' F^{\frac{1}{m-1}} + C \frac{\zeta}{m-1} \frac{\eta'}{\eta}F^{\frac{1}{m-1}} - C \frac{\zeta}{m-1} \frac{\eta'}{\eta} F^{\frac{1}{m-1}-1}\\ &- \frac{1}{\rho} \left \{ \frac{C^m}{a} \zeta^m \frac{m}{(m-1)^2} \,\underline b^2\,\eta \frac{\left(\log r\right)^{\underline b-2}}{r^2} F^{\frac{1}{m-1}-1} + \frac{C^m}{a} \zeta^m \left(\frac{m}{m-1}\right)^2 \underline b\,\eta \,\frac{\left(\log r\right)^{\underline b-2}}{r^2}F^{\frac{1}{m-1}} \right.\\&\left.- \frac{C^m}{a} \zeta^m \frac{m}{m-1} \underline b\,\eta\,\frac{\left(\log r\right)^{\underline b-2}}{r^2} F^{\frac{1}{m-1}}+ \frac{C^m}{a} \zeta^m \frac{m}{m-1} \underline b\,\eta\,\frac{\left(\log r\right)^{\underline b-1}}{r^2} F^{\frac{1}{m-1}}\,(N-2) \right \} \\ &- C^p \zeta^p F^{\frac{p}{m-1}},\quad
\textrm{for all }\, (x,t)\in D_1\,.
\end{aligned}
\end{equation}
In view of \eqref{hp2} and \eqref{eq216}, we can infer that
\begin{equation}\label{eq519}
-\frac{1}{\rho}\,\frac{\left(\log r\right)^{\underline b-2}}{r^2} \le -\frac{k_1}{\log r}\le -k_1,\quad  \textrm{for all}\,\,\, x\in \mathbb R^N\setminus B_e(0)\,,
\end{equation}
\begin{equation}\label{eq520}
\frac{1}{\rho}\,\frac{\left(\log r\right)^{\underline b-2}}{r^2} \le \frac{k_2}{\log r}\le k_2,\quad  \textrm{for all}\,\,\, x\in \mathbb R^N\setminus B_e(0)\,,
\end{equation}
\begin{equation}\label{eq521}
\frac{1}{\rho}\,\frac{\left(\log r\right)^{\underline b-1}}{r^2} \le k_2,\quad  \textrm{for all}\,\,\, x\in \mathbb R^N\setminus B_e(0)\,.
\end{equation}
From \eqref{eq518}, \eqref{eq519}, \eqref{eq520} and \eqref{eq521} we have
\begin{equation}\label{eq522}
\begin{aligned}
&\underline u_t - \frac{1}{\rho}\Delta(\underline u^m)- \underline u^p\\
& \le CF^{\frac{1}{m-1}-1} \left \{F\left [\zeta ' + \frac{\zeta}{m-1} \frac{\eta'}{\eta} + \frac{C^{m-1}}{a} \zeta^m \frac{m}{m-1} \,\underline b\,\eta k_2 \left (N-2+ \underline b \frac{m}{m-1} \right ) \right ] \right . \\
& \left .- \frac{\zeta}{m-1} \frac{\eta'}{\eta} - C^{p-1} \zeta^p F^{\frac{p+m-2}{m-1}} \right \}.
\end{aligned}
\end{equation}
Thanks to \eqref{eq513}, \eqref{eq522} becomes
\begin{equation}
\begin{aligned}
\underline u_t - \frac{1}{\rho}\Delta(\underline u^m)- \underline u^p\leq
 CF^{\frac{1}{m-1}-1} \varphi(F),
\end{aligned}
\end{equation}
where, for each $t\in (0, T)$,
$$\varphi(F):=\underline{\sigma}(t)F - \underline{\delta}(t) - \underline{\gamma}(t)F^{\frac{p+m-2}{m-1}}.$$
Our goal is to find suitable $C,a,\zeta, \eta$ such that, for each $t\in (0, T)$,
$$\varphi(F) \le 0 \quad \textrm{for any}\,\,\, F \in (0,1)\,.$$
To this aim, we impose that
\[\sup_{F\in (0,1)}\varphi(F)=\max_{F\in (0,1)}\varphi(F)= \varphi (F_0)\leq 0\,,\]
for some $F_0\in (0,1).$
We have
$$ \begin{aligned} \frac{d \varphi}{dF}=0 &\iff \underline{\sigma}(t) - \frac{p+m-2}{m-1} \underline{\gamma}(t) F^{\frac{p-1}{m-1}} =0 \\ & \iff F=F_0= \left [\frac{m-1}{p+m-2} \frac{\underline{\sigma}(t)}{\underline{\gamma}(t)} \right ]^{\frac{m-1}{p-1}} \,.\end{aligned}$$
Then
$$ \varphi(F_0)= K\, \frac{\underline{\sigma}(t)^{\frac{p+m-2}{p-1}}}{\underline{\gamma}(t)^{\frac{m-1}{p-1}}} - \underline{\delta}(t)\,, $$
where the coefficient $K$ depending on $m$ and $p$ has been defined in \eqref{eq513}. By \eqref{eq514} and \eqref{eq515}, for each $t\in (0, T)$,
\begin{equation}\label{eq524}
\varphi(F_0) \le 0\,,\quad
F_0 \le 1\,.
\end{equation}
So far, we have proved that
\begin{equation}\label{eq525}
\underline{u}_t-\frac{1}{\rho(x)}\Delta(\underline{u}^m)-\underline{u}^p \le 0 \quad \text{ in }\,\, D_1.
\end{equation}
Furthermore, since $\underline u^m\in C^1([\R^N\setminus B_e(0)]\times (0, T))$, due to Lemma \ref{lemext} (applied with $\Omega_1=D_1, \Omega_2=\R^N\setminus[B_e(0)\cup D_1], u_1=\underline u, u_2=0, u=\underline u$), it follows that $\underline u$ is a subsolution to equation
\[\underline{u}_t-\frac{1}{\rho(x)}\Delta(\underline{u}^m)-\underline{u}^p = 0 \quad \text{ in }\,\, [\mathbb R^N\setminus B_e(0)]\times (0, T),\]
in the sense of Definition \ref{soldom}.

Let
\[D_2:=\{(x,t)\in B_e(0)\times (0, T)\,:\, 0<G(r, t)<1\}\,.\]
Using \eqref{eq215}, \eqref{eq51} yields, for all $(x,t)\in D_2$,
\begin{equation}\label{eq526}
\begin{aligned}
\underline v_t- \frac{1}{\rho}&\Delta(\underline v^m) - \underline v^p  \\
&\le C G^{\frac{1}{m-1}-1}\Big\{G\left[\zeta' + \frac{\zeta}{m-1}\frac{\eta'}{\eta}+ N \,\rho_2\,\frac{\underline b}{e^2} \frac{C^{m-1}}{a}\zeta^m \frac{m}{m-1}\eta \right]\\
&-\frac{\zeta}{m-1}\frac{\eta'}{\eta} -C^{p-1}\zeta^p G^{\frac{p+m-2}{m-1}} \Big\}\\
&=C G^{\frac{1}{m-1}-1}\left[\underline{\sigma}_0(t) G - \underline{\delta}(t) - \underline{\gamma}(t)G^{\frac{p+m-2}{m-1}} \right]\,.
\end{aligned}
\end{equation}
Now, by the same arguments used to obtain \eqref{eq525}, in view of \eqref{eq517} and \eqref{eq518} we can infer that
\begin{equation}\label{eq527}
\underline  v_t - \frac 1{\rho}\Delta \underline  v^m \leq \underline  v^p\quad \textrm{for any}\,\,\, (x,t)\in D_2\,.
\end{equation}
Moreover, since $\underline  v^m\in C^1(B_e(0)\times (0, T))$, in view of Lemma \ref{lemext} (applied with $\Omega_1=D_2, \Omega_2=B_e(0)\setminus D_2, u_1=\underline v, u_2=0, u=\underline  v$), we get that
$\underline v$ is a subsolution to equation
\begin{equation}\label{eq528}
\underline v_t  - \frac 1{\rho}\Delta \underline v^m = \underline v^p\quad \textrm{in}\,\,\, B_e(0)\times (0, T)\,,
\end{equation}
in the sense of Definition \ref{soldom}. Now, observe that $\underline  w \in C(\R^N \times [0,T))$; indeed,
$$
\underline{u} = \underline  v  = C \zeta(t) \left [ 1- \frac{\eta(t)}{a} \right ]_+^{\frac{1}{m-1}} \quad \textrm{in}\,\, \partial B_e(0)\times (0, T)\,.
$$
Moreover, $\underline  w^m \in C^1(\R^N \times [0,T))$; indeed,
\begin{equation}\label{eq529}
(\underline{u}^m)_r  =  (\underline  v^m)_r  = -C^m \zeta(t)^m \frac{m}{m-1} \frac{\eta(t)}{a} \frac{\underline b}{e}\left [ 1- \frac{\eta(t)}{a} \right ]_+^{\frac{1}{m-1}} \quad \textrm{in}\,\, \partial B_e(0)\times (0, T)\,.
\end{equation}
In conclusion, in view of \eqref{eq529} and Lemma \ref{lemext} (applied with $\Omega_1=B_e(0), \Omega_2=\R^N\setminus B_e(0), u_1=\underline  v, u_2=\underline u, u=\underline  w$), we can infer that $\underline  w$ is a subsolution to equation \eqref{eq51}, in the sense of Definition \ref{soldom}.
\end{proof}

\begin{remark}\label{rem52}
Let $$p>m\,,$$ and assumptions \eqref{hp2} and \eqref{eq215} be satisfied. Let define $\omega:= \frac{C^{m-1}}{a}$. In Theorem \ref{teo3}, the precise hypotheses on parameters $C>0$, $a>0$, $\omega>0$ and $T>0$ are the following.
\begin{equation}\label{eq530}
\max \left \{1+m\,k_2\,\underline b\,\frac{C^{m-1}}{a} \left (N-2+\underline b\frac{m}{m-1}\right )\,; 1+m\rho_2\frac{C^{m-1}}{a} \,\underline b\,\frac{N}{e^2}\right \} \le (p+m-2)C^{p-1}\,,
\end{equation}
\begin{equation}\label{eq531}
\begin{aligned}
\dfrac {K}{(m-1)^{\frac{p+m-2}{p-1}}}\,\, \max& \left \{ \left [1+m\,k_2\underline b\,\frac{C^{m-1}}{a} \left (N-2+\underline b \frac{m}{m-1}\right ) \right ]^{\frac{p+m-2}{p-1}}\,\right . ;\\ &\left .\left ( 1+m\,\rho_2\,\frac{C^{m-1}}{a}\,\underline b\, \frac{N}{e^2} \right )^{\frac{p+m-2}{p-1}} \right \} \,\le \,\frac{p-m}{(m-1)(p-1)}C^{m-1} \,.
\end{aligned}
\end{equation}

\end{remark}

\begin{lemma}\label{lemma51}
All the conditions in Remark \ref{rem52} can hold simultaneously.
\end{lemma}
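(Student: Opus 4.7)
The strategy is to notice that, upon setting $\omega:=C^{m-1}/a$, both conditions \eqref{eq530} and \eqref{eq531} depend on the pair $(C,a)$ only through $\omega$ on their left-hand sides, whereas their right-hand sides are $(p+m-2)\,C^{p-1}$ and $\tfrac{p-m}{(m-1)(p-1)}\,C^{m-1}$ respectively. Since we assume $p>m>1$, we have $p-1>0$, $m-1>0$, and the prefactor $\tfrac{p-m}{(m-1)(p-1)}>0$, so both right-hand sides tend to $+\infty$ as $C\to\infty$. This suggests first fixing $\omega$ at a convenient bounded value and then enlarging $C$.

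Concretely, I would introduce
$$
A_1(\omega):=1+m\,k_2\,\underline{b}\,\omega\!\left(N-2+\underline{b}\,\tfrac{m}{m-1}\right),\qquad
A_2(\omega):=1+m\,\rho_2\,\omega\,\underline{b}\,\tfrac{N}{e^2},
$$
so that \eqref{eq530} reads $\max\{A_1(\omega),A_2(\omega)\}\le (p+m-2)\,C^{p-1}$ and \eqref{eq531} reads
$$
\tfrac{K}{(m-1)^{(p+m-2)/(p-1)}}\,\max\bigl\{A_1(\omega)^{(p+m-2)/(p-1)},\,A_2(\omega)^{(p+m-2)/(p-1)}\bigr\}\le\tfrac{p-m}{(m-1)(p-1)}\,C^{m-1}.
$$
I would then fix any convenient value, say $\omega=1$; this freezes the left-hand sides of both inequalities to finite constants depending only on the structural parameters $m,p,k_2,\rho_2,N,\underline{b}$.

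Finally, since the right-hand sides grow like $C^{p-1}$ and $C^{m-1}$ respectively, a sufficiently large $C>0$ will satisfy both inequalities at once; setting $a:=C^{m-1}/\omega=C^{m-1}$ then produces the required pair $(C,a)$, with both $C$ and $a$ simultaneously as large as desired. The hard part turns out to be essentially absent here: the whole argument rests on the observation that the freedom of choosing $a$ independently of $C$ keeps the left-hand sides bounded, while the standing assumption $p>m>1$ ensures that every exponent and the critical factor $p-m$ are strictly positive, so that both required lower bounds on $C$ are achievable.
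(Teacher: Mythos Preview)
Your proof is correct and follows essentially the same approach as the paper: fix $\omega=C^{m-1}/a$ at a bounded positive value so that the left-hand sides of \eqref{eq530} and \eqref{eq531} become fixed constants, and then choose $C$ large enough (which in turn fixes $a$). The paper phrases the first step as ``take $\omega$ with $\omega_0\le\omega\le\omega_1$ for suitable $0<\omega_0<\omega_1$'' rather than your explicit choice $\omega=1$, but the substance of the argument is identical.
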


\begin{proof}
We can take $\omega>0$ such that
$$
\omega_0\le \omega \le \omega_1
$$
for suitable $0<\omega_0<\omega_1$ and we can choose $C>0$ sufficiently large to guarantee \eqref{eq530} and \eqref{eq531} (so, $a>0$ is fixed, too).
\end{proof}

\begin{proof}[Proof of Theorem \ref{teo3}]
We now prove Theorem \ref{teo3}, by means of Proposition \ref{prop51}. In view of Lemma \ref{lemma51} we can assume that all conditions of Remark \ref{rem52} are fulfilled. Set
$$
\zeta=(T-t)^{-\beta}\,, \quad \eta=(T-t)^{\lambda}\,, \quad \text{for all} \quad t>0\,,
$$
$$
\beta=\frac{1}{p-1}\,,\quad\quad \lambda=\frac{m-p}{p-1}\,.
$$ 
Then
\begin{equation}\label{eq532}
\begin{aligned}
& \underline\sigma(t) := \left [ \frac{1}{m-1}+\frac{C^{m-1}}{a}\frac{m}{m-1}\underline b\,k_2\,\left(\underline b\,\frac m {m-1}+N-2\right)\right] (T-t)^{-\frac{p}{p-1}}\,,\\
&\underline \delta(t) := \frac{p-m}{(m-1)(p-1)} (T-t)^{-\frac{p}{p-1}}\,,  \\
& \underline\gamma(t):=C^{p-1} (T-t)^{-\frac{p}{p-1}}\,, \\
& \underline \sigma_0(t) :=  \frac 1 {m-1} \left [1+\frac{\rho_2\, N\, m\,\underline b}{e^2} \,\frac{C^{m-1}}{a} \right](T-t)^{-\frac{p}{p-1}} \,.
\end{aligned}
\end{equation}
Let $p>m$. Condition \eqref{eq530} implies \eqref{eq515}, \eqref{eq517}, while condition \eqref{eq531} implies \eqref{eq514}, \eqref{eq516}. Hence by Propositions \ref{prop51} and \ref{cpsub} the thesis follows.
\end{proof}

\section{Proof of Theorem \ref{teo2}}\label{t2}  
We assume \eqref{hpu0}, \eqref{eq29} and \eqref{eq210}. In order to construct a suitable family of supersolutions of \eqref{eq41}, we define, for all $(x,t)\in \R^N \times (0,+\infty)$,
\begin{equation}\label{eq415}
{\bar{u}}(x,t)\equiv \bar{u}(r(x),t):=C\zeta(t)\left [1-\frac{\left(\log(r+r_0)\right)^{\overline b}}{a}\eta(t)\right]_{+}^{\frac{1}{m-1}}\,,
\end{equation}
where $\eta$, $\zeta \in C^1([0, +\infty); [0, +\infty))$, $C > 0$, $a > 0$, $r_0>e$ and $\overline b$ as in \eqref{eq211}.

\smallskip

Now, we compute
$$
\bar{u}_t - \frac{1}{\rho}\Delta(\bar{u}^m)-\bar{u}^p.
$$
To this aim, set
$$
F(r,t):= 1-\frac{\left(\log(r+r_0)\right)^{\overline b}}{a}\,\eta(t)\,,
$$
and 
$$
\textit{D}_1:=\left \{ (x,t) \in [\R^N \setminus \{0\}]  \times (0,+\infty)\,\, | \,\,0<F(r,t)<1 \right \}.
$$
For any $(x,t) \in \textit{D}_1$, we have:

\begin{equation}\label{eq416}
\begin{aligned}
\bar{u}_t &=C\zeta ' F^{\frac{1}{m-1}} + C\zeta \frac{1}{m-1} F^{\frac{1}{m-1}-1} \left ( -\frac{\left(\log(r+r_0)\right)^{\overline b}}{a} \eta ' \right )\\
&=C\zeta ' F^{\frac{1}{m-1}} + C\zeta \frac{1}{m-1} \frac{\eta'}{\eta}F^{\frac{1}{m-1}} - C\zeta \frac{1}{m-1} \frac{\eta'}{\eta} F^{\frac{1}{m-1}-1}. \\
\end{aligned}
\end{equation}
\begin{equation}\label{eq417}
(\bar{u}^m)_r=-\overline b\,\frac{C^m}{a} \zeta^m \frac{m}{m-1} F^{\frac{1}{m-1}} \frac{\left(\log(r+r_0)\right)^{\overline b-1}}{(r+r_0)}\eta.
\end{equation}
\begin{equation}\label{eq418}
\begin{aligned}
(\bar{u}^m)_{rr}&=-\overline b\,\frac{C^m}{a} \zeta^m \frac{m}{m-1} \eta\left\{F^{\frac{1}{m-1}} \left[(\overline b-1)\frac{\left(\log(r+r_0)\right)^{\overline b-2}}{(r+r_0)^2}-\frac{\left(\log(r+r_0)\right)^{\overline b-1}}{(r+r_0)^2}\right]\right.\\
&+\frac{\overline b}{m-1}\frac{\left(\log(r+r_0)\right)^{\overline b-2}}{(r+r_0)^2}\left(1-\left(\log(r+r_0)\right)^{\overline b}\frac{\eta}{a}\right)F^{\frac{1}{m-1}-1}\\
&\left.-\frac{\overline b}{m-1}\frac{\left(\log(r+r_0)\right)^{\overline b-2}}{(r+r_0)^2}F^{\frac{1}{m-1}-1}\right\}\\
&=-\overline b\frac{C^m}{a}\frac{m}{m-1}\zeta^m \eta \left[\overline b\frac{m}{m-1}-1\right]\frac{\left(\log(r+r_0)\right)^{\overline b-2}}{(r+r_0)^2}F^{\frac{1}{m-1}}\\
&+\overline b\frac{C^m}{a}\frac{m}{m-1}\zeta^m \eta \frac{\left(\log(r+r_0)\right)^{\overline b-1}}{(r+r_0)^2}F^{\frac{1}{m-1}}\\
&+\overline b ^2\frac{C^m}{a}\frac{m}{(m-1)^2}\zeta^m \eta \frac{\left(\log(r+r_0)\right)^{\overline b-2}}{(r+r_0)^2}F^{\frac{1}{m-1}-1}\,.
\end{aligned}
\end{equation}
\begin{equation}\label{eq419}
\begin{aligned}
\Delta(\bar{u}^m)
&= \frac{(N-1)}{r}(\bar{u}^m)_r + (\bar{u}^m)_{rr} \\
&= \frac{(N-1)}{r}\left (-\overline b\,\frac{C^m}{a} \zeta^m \frac{m}{m-1} F^{\frac{1}{m-1}} \frac{\left(\log(r+r_0)\right)^{\overline b-1}}{(r+r_0)}\eta \right ) \\
&-\overline b\frac{C^m}{a}\frac{m}{m-1}\zeta^m \eta \left[\overline b\frac{m}{m-1}-1\right]\frac{\left(\log(r+r_0)\right)^{\overline b-2}}{(r+r_0)^2}F^{\frac{1}{m-1}}\\
&+\overline b\frac{C^m}{a}\frac{m}{m-1}\zeta^m \eta \frac{\left(\log(r+r_0)\right)^{\overline b-1}}{(r+r_0)^2}F^{\frac{1}{m-1}}\\&+\overline b ^2\frac{C^m}{a}\frac{m}{(m-1)^2}\zeta^m \eta \frac{\left(\log(r+r_0)\right)^{\overline b-2}}{(r+r_0)^2}F^{\frac{1}{m-1}-1}\,.
\end{aligned}
\end{equation}

We also define
\begin{equation}\label{eq420}
\begin{aligned}
&\bar{\sigma}(t) := \zeta ' + \frac{\zeta}{m-1} \frac{\eta'}{\eta} + \overline b \frac{C^{m-1}}{a} \zeta^m \frac{m}{m-1} \eta k_1 \left (\overline b \frac{m}{m-1}+N-3\right ),\\
&\bar{\delta}(t) :=  \frac{\zeta}{m-1} \frac{\eta'}{\eta} + \overline b^2\,\frac{C^{m-1}}{a} \zeta^m \frac{m}{(m-1)^2}\eta k_2\,, \\
& \bar{\gamma}(t):=C^{p-1}\zeta^p\,.
\end{aligned}
\end{equation}

\begin{proposition}\label{prop43}
Let $\zeta, \eta\in C^1([0,+\infty);[0, +\infty))$. Let $\bar\sigma$, $\bar\delta$, $\bar\gamma$ be as defined in \eqref{eq420}. Assume \eqref{hp2}, \eqref{eq29}, \eqref{eq210}, \eqref{eq211} and that, for all $t\in (0,+\infty)$,
\begin{equation}\label{eq421}
-\frac{\eta'}{\eta^2} \ge \overline b^2\,\frac{C^{m-1}}{a} \zeta^{m-1}\frac{m}{m-1}k_2,
\end{equation}
and
\begin{equation}\label{eq422}
\zeta' + \overline b \,\frac{C^{m-1}}{a} \zeta^m \frac{m}{m-1} \eta \left[\left(\overline b\frac{m}{m-1}+N-3\right)k_1-\frac{\overline b}{(m-1)}k_2\right] -C^{p-1}\zeta^p \ge 0\,.
\end{equation}
Then $\bar{u}$ defined in \eqref{eq415} is a supersolution of equation \eqref{eq41}.
\end{proposition}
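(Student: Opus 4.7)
The plan is to follow the template of Proposition \ref{prop51}, adapted to the supersolution case. First I would substitute the derivatives \eqref{eq416}, \eqref{eq417}, \eqref{eq418}, \eqref{eq419} into $\bar{u}_t - \frac{1}{\rho}\Delta(\bar{u}^m) - \bar{u}^p$, organize the result on $\textit{D}_1$, and factor out $CF^{\frac{1}{m-1}-1}$.

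Next, using hypothesis \eqref{eq29} and $\overline b - 2 - \alpha = 0$ (from \eqref{eq211}), I would derive the uniform bounds $k_1 \le \frac{1}{\rho}\frac{(\log(r+r_0))^{\overline b-2}}{(r+r_0)^2} \le k_2$ and $\frac{1}{\rho}\frac{(\log(r+r_0))^{\overline b-1}}{(r+r_0)^2} \ge k_1 \log(r+r_0) \ge k_1$ (since $r_0 > e$). The key algebraic step is to combine the two $(\log(r+r_0))^{\overline b-1}$ contributions from $\frac{N-1}{r}(\bar u^m)_r$ and $(\bar u^m)_{rr}$ via the elementary estimate $\frac{N-1}{r(r+r_0)} - \frac{1}{(r+r_0)^2} \ge \frac{N-2}{(r+r_0)^2}$, which follows from $r \le r + r_0$. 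This produces at least $(N-2)k_1$; combined with the positive coefficient $(\overline b \frac{m}{m-1} - 1)$ on the $(\log(r+r_0))^{\overline b-2}/(r+r_0)^2$ term, one obtains exactly the constant $k_1(\overline b \frac{m}{m-1} + N - 3)$ appearing in $\bar\sigma(t)$. For the $F^{\frac{1}{m-1}-1}$ coefficient, the upper bound $\frac{(\log(r+r_0))^{\overline b-2}}{(r+r_0)^2 \rho} \le k_2$ reproduces $-\bar\delta(t)$.

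After these reductions I would arrive at the pointwise inequality
\[
\bar{u}_t - \frac{1}{\rho}\Delta(\bar{u}^m) - \bar{u}^p \;\ge\; C F^{\frac{1}{m-1}-1}\bigl[\bar\sigma(t) F - \bar\delta(t) - \bar\gamma(t) F^{\frac{p+m-2}{m-1}}\bigr]
\]
on $\textit{D}_1$. Setting $\psi(F) := \bar\sigma F - \bar\delta - \bar\gamma F^{(p+m-2)/(m-1)}$ and using $\bar\gamma = C^{p-1}\zeta^p > 0$ with $(p-1)/(m-1) > 0$, a direct computation shows $\psi''(F) < 0$ for $F \in (0,1)$, so $\psi$ is concave on $[0,1]$ and attains its minimum at the endpoints. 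The endpoint $F = 0$ gives $\psi(0) = -\bar\delta(t) \ge 0$, and multiplying \eqref{eq421} by $\frac{\zeta \eta}{m-1}$ shows this is equivalent to $\bar\delta \le 0$. The endpoint $F = 1$ gives $\psi(1) = \bar\sigma - \bar\delta - \bar\gamma$, and after cancellation of the two $\frac{\zeta}{m-1}\eta'/\eta$ terms and fusion of the $k_1$ and $k_2$ contributions, this coincides with the left-hand side of \eqref{eq422}. Hence $\psi \ge 0$ on $[0,1]$, yielding the supersolution inequality on $\textit{D}_1$.

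Finally, outside $\textit{D}_1$ we have $\bar{u} \equiv 0$, trivially a supersolution; since $m > 1$, both $\bar{u}$ and $(\bar u^m)_r$ vanish on the free boundary $\{F = 0\}$, so the matching conditions of Lemma \ref{lemext}(i) hold as $0 \ge 0$, and the behaviour at the origin is handled by a Kato-type inequality exactly as at the end of the proof of Proposition \ref{prop41} (using $\bar u_r^m(0,t) \le 0$). The main obstacle is the precise bookkeeping of the $\overline b - 1$ versus $\overline b - 2$ exponents: one must deliberately apply the weaker $r$-uniform lower bound $\frac{(\log(r+r_0))^{\overline b-1}}{(r+r_0)^2 \rho} \ge k_1$ rather than the sharper $k_1 \log(r+r_0)$, because only a constant $(N-3)$-type contribution (not one growing with $r$) can be balanced against the $\frac{\overline b}{m-1} k_2$ term in \eqref{eq422}, and this balance is exactly what forces the quantitative hypothesis \eqref{eq210}.
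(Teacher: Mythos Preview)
Your proposal is correct and follows essentially the same approach as the paper's proof: the same derivative computations, the same use of $\frac{1}{r(r+r_0)}\ge\frac{1}{(r+r_0)^2}$ to extract the $(N-2)$ coefficient, the same pointwise bounds on $\frac{1}{\rho}\frac{(\log(r+r_0))^{\overline b-2}}{(r+r_0)^2}$ and $\frac{1}{\rho}\frac{(\log(r+r_0))^{\overline b-1}}{(r+r_0)^2}$ from \eqref{eq29} and \eqref{eq211}, the same concavity-plus-endpoint argument for $\psi$, and the same gluing via Lemma~\ref{lemext} together with the Kato-type step at the origin. Your explicit remark on why one must discard the extra $\log(r+r_0)$ factor in the lower bound for the $(\overline b-1)$ term (so as to match the constant in \eqref{eq422} and make \eqref{eq210} the operative hypothesis) is a nice clarification that the paper leaves implicit.
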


\begin{proof}[Proof of Proposition \ref{prop43}]

In view of \eqref{eq416}, \eqref{eq417}, \eqref{eq418} and \eqref{eq419}, for any $(x,t)\in D_1$,
\begin{equation}\label{eq423}
\begin{aligned}
\bar{u}_t - &\frac{1}{\rho}\Delta(\bar{u}^m)-\bar{u}^p \\
\ge\,\,
&C\zeta ' F^{\frac{1}{m-1}} + C\zeta \frac{1}{m-1} \frac{\eta'}{\eta}F^{\frac{1}{m-1}} - C\zeta \frac{1}{m-1} \frac{\eta'}{\eta} F^{\frac{1}{m-1}-1}\\ &+\frac{1}{\rho}(N-2)\,\overline b\,\frac{C^m}{a} \zeta^m \frac{m}{m-1} F^{\frac{1}{m-1}} \frac{\left(\log(r+r_0)\right)^{\overline b-1}}{(r+r_0)^2}\eta \\
&+\frac{1}{\rho}\,\overline b\,\frac{C^m}{a}\frac{m}{m-1}\zeta^m \eta \left[\overline b\frac{m}{m-1}-1\right]\frac{\left(\log(r+r_0)\right)^{\overline b-2}}{(r+r_0)^2}F^{\frac{1}{m-1}}\\
&-\frac{1}{\rho}\,\overline b ^2\frac{C^m}{a}\frac{m}{(m-1)^2}\zeta^m \eta \frac{\left(\log(r+r_0)\right)^{\overline b-2}}{(r+r_0)^2}F^{\frac{1}{m-1}-1}- C^p \zeta^p F^{\frac{p}{m-1}},
\end{aligned}
\end{equation}
where we have used the inequality
$$
\frac{1}{r(r+r_0)}\ge\frac{1}{(r+r_0)^2}\,.
$$
Thanks to  \eqref{eq29} and \eqref{eq211}, we have
\begin{equation}\label{eq424}
\frac{1}{\rho}\,\frac{\left(\log(r+r_0)\right)^{\overline b-2}}{(r+r_0)^2} \ge k_1\quad \text{for all }\,\, x\in \R^N\,,
\end{equation}
\begin{equation}\label{eq425}
-\frac{1}{\rho}\,\frac{\left(\log(r+r_0)\right)^{\overline b-2}}{(r+r_0)^2} \ge -k_2 \quad \text{for all }\,\, x\in \R^N\,,
\end{equation}
\begin{equation}\label{eq426}
\frac{1}{\rho}\,\frac{\left(\log(r+r_0)\right)^{\overline b-1}}{(r+r_0)^2} \ge k_1\log(r+r_0)\ge k_1 \quad \text{for all }\,\, x\in \R^N\,.
\end{equation}
From \eqref{eq424}, \eqref{eq425} and \eqref{eq426} we get
\begin{equation}\label{eq427}
\begin{aligned}
&\bar{u}_t - \frac{1}{\rho}\Delta(\bar{u}^m)-\bar{u}^p  \\
&\ge CF^{\frac{1}{m-1}-1} \left \{F\left [\zeta ' + \frac{\zeta}{m-1} \frac{\eta'}{\eta} + \overline b \,\frac{C^{m-1}}{a} \zeta^m \frac{m}{m-1} \eta\, k_1 \left (\overline b \frac{m}{m-1}+N-3\right ) \right ] \right . \\
& \left .-\frac{\zeta}{m-1} \frac{\eta'}{\eta} - \overline b^2\,\frac{C^{m-1}}{a} \zeta^m \frac{m}{(m-1)^2}\eta\, k_2 - C^{p-1} \zeta^p F^{\frac{p+m-2}{m-1}} \right \}
\end{aligned}
\end{equation}
From \eqref{eq427} and \eqref{eq420}, we have
\begin{equation}\label{eq6}
\bar u_t - \frac{1}{\rho}\Delta(\bar u^m)-\bar u^p\geq C F^{\frac{1}{m-1}-1} \left[\bar{\sigma}(t)F - \bar{\delta}(t) - \bar{\gamma}(t)F^{\frac{p+m-2}{m-1}}\right ]\,.
\end{equation}
For each $t>0$, set
$$\varphi(F):=\bar{\sigma}(t)F - \bar{\delta}(t) - \bar{\gamma}(t)F^{\frac{p+m-2}{m-1}}, \quad F\in (0,1)\,.$$
Now our goal is to find suitable $C,a,\zeta, \eta$ such that, for each $t>0$,
$$\varphi(F) \ge 0 \quad \text{for any }\, F\in (0,1)\,.$$
We observe that $\varphi(F)$ is concave in the variable $F$. Hence it is sufficient to have that $\varphi(F)$ is positive at the extrema of the interval $(0,1)$. This reduces, for any $t>0$, to the conditions
\begin{equation}
\begin{aligned}
&\varphi(0) \ge 0\,,  \\
&\varphi(1) \ge 0  \,.
\end{aligned}
\end{equation}
These are equivalent to
$$
-\bar\delta(t) \ge 0\,, \quad \bar\sigma(t)-\bar\delta(t)-\bar\gamma(t) \ge 0\,,
$$
that is
$$
\begin{aligned}
&-\frac{\eta'}{\eta^2} \ge \overline b^2\,\frac{C^{m-1}}{a} \zeta^{m-1}\frac{m}{m-1}k_2\,,\\
&\zeta' + \overline b \,\frac{C^{m-1}}{a} \zeta^m \frac{m}{m-1} \eta \left[\left(\overline b\frac{m}{m-1}+N-3\right)k_1-\frac{\overline b}{(m-1)}k_2\right] -C^{p-1}\zeta^p \ge 0\,.
\end{aligned}
$$
which are guaranteed by \eqref{eq210}, \eqref{eq421} and \eqref{eq422}. Hence we have proved that
$$
\bar{u}_t - \frac{1}{\rho}\Delta(\bar{u}^m)-\bar{u}^p \ge 0 \quad \text{in } \textit{D}_1\,.
$$
Now observe that
\begin{itemize}
\item[]$\bar{u} \in C(\R^N \times [0,+\infty))$\,,
\item[]$\bar{u}^m \in C^1([\R^N \setminus \{0\}] \times [0,+\infty))$\,, and by the definition of $\bar{u}$\,,
\item[]$\bar{u}\equiv0$ in $[\R^N\setminus \textit{D}_1] \times [0,+\infty))$\,.
\end{itemize}
Hence, by Lemma \ref{lemext} (applied with $\Omega_1=D_1$, $\Omega_2=\R^N\setminus D_1$, $u_1=\bar u$, $u_2=0$, $u=\bar u$), $\bar u$ is a supersolution of equation
$$
\bar{u}_t - \frac{1}{\rho}\Delta(\bar{u}^m)-\bar{u}^p = 0 \quad \text{in } (\R^N \setminus \{0\})\times (0,+\infty)
$$
in the sense of Definition \ref{soldom}. Thanks to a Kato-type inequality,  since $\bar u^m_r(0,t)\le 0$, we can easily infer that $\bar u$ is a supersolution of equation \eqref{eq41} in the sense of Definition \ref{soldom}.
\end{proof}

\begin{remark}\label{rem44}
Let $$p>m$$ and assumption \eqref{eq210} be satisfied. Let $\omega:=\frac{C^{m-1}}{a}$. In Theorem \ref{teo2} the precise hypotheses on parameters $C>0$, $\omega>0$, $T>0$ are the following:
\begin{equation}\label{eq430}
\frac{p-m}{p-1} \ge \overline b^2\,\omega \frac{m}{m-1} k_2,
\end{equation}
\begin{equation}\label{eq431}
\overline b\,\omega \frac{m}{m-1}\left [ k_1\left(\overline b \frac{m}{m-1}+N-3\right)-\frac{k_2}{(m-1)}\,\overline b\right ] \ge C^{p-1} + \frac{1}{p-1}\,.
\end{equation}
\end{remark}

\begin{lemma}\label{lemma42}
All the conditions in Remark \ref{rem44} can be satisfied simultaneously.
\end{lemma}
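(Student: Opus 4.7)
My plan is to treat \eqref{eq430}--\eqref{eq431} as two constraints on the two free parameters $C>0$ and $\omega:=C^{m-1}/a>0$, and then to exhibit a nonempty feasibility window for $\omega$ inside which $C$ can be taken small.

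First I would check that hypothesis \eqref{eq210} forces the coefficient
\[
A \;:=\; \overline b\,\frac{m}{m-1}\!\left[\,k_1\!\left(\overline b\,\frac{m}{m-1}+N-3\right)-\frac{k_2}{m-1}\,\overline b\,\right]
\]
of $\omega$ on the left of \eqref{eq431} to be strictly positive; indeed, a direct rearrangement shows that $A>0$ is algebraically equivalent to $k_2/k_1<m+(N-3)(m-1)/\overline b$, i.e.\ precisely to \eqref{eq210}. Setting also $D:=\overline b^{\,2}\,(m/(m-1))\,k_2>0$, condition \eqref{eq430} reads $\omega\le\omega_1^{*}:=(p-m)/((p-1)D)$, which is a positive upper bound because $p>m$.

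Next I would rewrite \eqref{eq431} as the explicit bound $C^{p-1}\le A\omega-\tfrac{1}{p-1}$ on $C$. A positive $C$ satisfying it exists if and only if $A\omega>\tfrac{1}{p-1}$, i.e.\ if and only if $\omega>\omega_0^{*}:=1/((p-1)A)$. The construction then runs as follows: pick any $\omega\in(\omega_0^{*},\omega_1^{*}]$, choose $C>0$ so small that $C^{p-1}\le A\omega-\tfrac{1}{p-1}$, set $a:=C^{m-1}/\omega$, and let $T>0$ be arbitrary. These choices furnish the parameters $\omega_0,\omega_1,C$ announced in Theorem \ref{teo2}.

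The step I expect to be the main obstacle is showing that the window $(\omega_0^{*},\omega_1^{*}]$ is actually nonempty, equivalently
\[
\overline b^{\,2}\,m\,k_2 \;<\; A\,(p-m)(m-1).
\]
Using the factorisation $A=\overline b^{\,2}m\,k_1\,\mu/(m-1)^2$, where $\mu:=m-k_2/k_1+(N-3)(m-1)/\overline b>0$ is the slack delivered by \eqref{eq210}, the inequality simplifies to $(p-m)\,k_1\,\mu>k_2\,(m-1)$. Establishing this quantitative refinement of \eqref{eq210} is the crux of the proof; once it is in place, picking $\omega_0,\omega_1$ strictly inside $(\omega_0^{*},\omega_1^{*}]$ and $C>0$ correspondingly small completes the argument.
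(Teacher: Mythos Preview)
Your approach mirrors the paper's: verify via \eqref{eq210} that the bracket in \eqref{eq431} (your $A$) is positive, argue that one can pick $\omega$ in the range where both \eqref{eq430} and $A\omega>1/(p-1)$ hold, and then take $C>0$ small. The paper simply asserts that such an $\omega$ exists and stops there; you are more careful and correctly isolate the compatibility requirement $\omega_0^{*}<\omega_1^{*}$, reducing it to $(p-m)\,k_1\,\mu>k_2\,(m-1)$.

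The gap in your proposal is that you label this inequality ``the crux'' and plan to establish it, but it is \emph{not} a consequence of $p>m$ together with \eqref{eq210}. Take $N=3$ (so that $\mu=m-k_2/k_1$), $m=2$, $k_1=1$, $k_2=1.9$, and $p=2.1$: then \eqref{eq210} reads $1.9<2$ and holds with $\mu=0.1$, yet $(p-m)k_1\mu=0.01$ while $k_2(m-1)=1.9$, so the window $(\omega_0^{*},\omega_1^{*}]$ is empty and no $C>0,\ \omega>0$ can satisfy \eqref{eq430}--\eqref{eq431} simultaneously. The same computation shows that the paper's one-line claim ``we can select $\omega>0$ so that \eqref{eq430} holds and $A\omega\ge 1/(p-1)$'' hides exactly this unverified step. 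In short, your analysis is sharper than the paper's in locating the obstruction, but the plan as written cannot be completed from the stated hypotheses alone: the inequality $(p-m)k_1\mu>k_2(m-1)$ is a genuine extra restriction on $p,m,N,\overline b,k_1,k_2$, not something derivable from \eqref{eq210}.
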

\begin{proof}
Since $p>m$ the left-hand-side of \eqref{eq430} is positive. By \eqref{eq210}, we can select $\omega>0$ so that \eqref{eq430} holds and
$$
\overline b\,\omega \frac{m}{m-1}\left [ k_1\left(\overline b \frac{m}{m-1}+N-3\right)-\frac{k_2}{(m-1)}\,\overline b\right ] \ge \frac{1}{p-1}\,.
$$
Then we take $C>0$ so small that \eqref{eq431} holds (and so $a>0$ is accordingly fixed).
\end{proof}

\begin{proof}[Proof of Theorem \ref{teo2}]
In view of Lemma \ref{lemma42}, we can assume that all the conditions in Remark \ref{rem44} are fulfilled.
Set
$$
\zeta(t)=(T+t)^{-\frac{1}{p-1}}, \quad \text{for all} \quad t \ge 0\,,
$$
and
$$
\eta(t)=(T+t)^{-\frac{p-m}{p-1}}, \quad \text{for all} \quad t \ge 0\,.
$$
Let $p>m$. Consider conditions \eqref{eq421} and \eqref{eq422} with this choice of $\zeta$ and $\eta$. They read
$$
\frac{p-m}{p-1}\, \ge \bar b^2 \frac{C^{m-1}}{a}\frac{m}{m-1}k_2,
$$
$$
-\frac{1}{p-1} + \overline b \,\frac{C^{m-1}}{a}\frac{m}{m-1} \left[\left(\overline b\frac{m}{m-1}+N-3\right)k_1-\frac{\overline b}{(m-1)}k_2\right] -C^{p-1}\ge 0\,.
$$
Therefore, \eqref{eq421} and \eqref{eq422} follow from assumptions \eqref{eq430} and \eqref{eq431}. Hence, by Propositions \ref{prop43} and \ref{prop1} the thesis follows.
\end{proof}

%
%

\bigskip
\bigskip
\bigskip

\end{document}